\documentclass[12pt, reqno, a4paper]{amsart}
\usepackage{times}

\usepackage[utf8]{inputenc}
\usepackage[USenglish]{babel}
\usepackage{amsmath,amsthm,amssymb,amsfonts}
\usepackage{mathtools}
\usepackage{mathrsfs}
\usepackage{bbm}
\usepackage{booktabs}

\usepackage{indentfirst}
\usepackage{enumitem}
\usepackage{xcolor}
\usepackage{graphicx}

\usepackage{float}

\usepackage[breaklinks=true, bookmarksopenlevel=1, bookmarksdepth=2]{hyperref}
\hypersetup{
colorlinks,
   linkcolor={cyan!80!black},
   citecolor={cyan!80!black},
 urlcolor={cyan!80!black}
}

\allowdisplaybreaks

\newtheorem{thm}{Theorem}[section]
\newtheorem{cor}[thm]{Corollary}
\newtheorem{lem}[thm]{Lemma}
\newtheorem{prop}[thm]{Proposition}

\theoremstyle{plain} % just in case the style had changed
\newcommand{\thistheoremname}{}
\newtheorem*{genericthm}{\thistheoremname}

\theoremstyle{definition}

\newtheorem*{prob}{Problem}

\theoremstyle{remark}

\newtheorem*{ntt}{Notation}

\numberwithin{equation}{section}

\newcommand{\N}{\mathbb{N}}      % N = Naturals
\newcommand{\Z}{\mathbb{Z}}      % Z = Integers
\newcommand{\Q}{\mathbb{Q}}      % Q = Rationals
\newcommand{\R}{\mathbb{R}}      % R = Reals
\newcommand{\eps}{\varepsilon}   % epsilon

\newcommand{\ul}[1]{\mathbf{#1}}  % Underlined letter (vector)

\newcommand{\logmap}{\operatorname{logmap}} 
\newcommand{\vol}{\operatorname{vol}}
\newcommand{\NVol}{\widehat{\operatorname{vol}}}
\newcommand{\conv}{\operatorname{Conv}}
\renewcommand{\P}{\mathbb{P}}

\usepackage[margin=1in]{geometry}

\restylefloat{table}

\setlist[itemize]{leftmargin=*}
\setlist[enumerate]{leftmargin=*}

\begin{document}

\title{The multiplication table problem in large dimensions}%
\author{Cihan Sabuncu}
\address{Max Planck Institute for Mathematics\\
Vivatsgasse 7\\
53111 Bonn\\
Germany}
\curraddr{}
\email{sabuncu@mpim-bonn.mpg.de}
\thanks{}

\author{Christian T\'afula}
\address{Institute of Mathematics, Statistics\\
and Computer Science\\
University of S\~ao Paulo\\
Rua do Mat\~ao, 1010\\
S\~ao Paulo, SP 05508-090\\
Brazil}
\curraddr{}
\email{tafula@ime.usp.br}
\thanks{C. T\'afula was supported by the São Paulo Research Foundation (FAPESP), Brazil, Process No.~2025/15961-3.}

\subjclass[2020]{Primary 11N25; Secondary 52B20, 52A38}%
\keywords{Multiplication table problem, Khovanskii's theorem, lattice polytopes, convex volumes}%

% ----------------------------------------------------------------
 \begin{abstract}
  For $N\geq 2$ and $k\geq 1$, let $M_k(N):=\#\{x_1\cdots x_k : x_i\in\{1,\ldots,N\}\text{ for all } i\}$ be the $k$-dimensional multiplication table. Given $N$, Khovanskii's theorem implies that $M_k(N)$ agrees, for all sufficiently large $k$, with a polynomial in $k$ of degree $\pi(N)$. We determine the asymptotic size of its leading coefficient, proving that, as $N\to\infty$, with $k$ sufficiently large relative to $N$,
  \[ M_k(N) = \exp\bigg((2\pi+o(1))\frac{\sqrt{N}}{\log N}\bigg)\frac{k^{\pi(N)}}{\pi(N)!}. \]
  We also study the analogous problem when the factors are restricted to $y$-smooth integers. For $y=o(\log N)$, we prove that the number of distinct products of $k$ such integers up to $N$ is asymptotic to the number of $y$-smooth integers up to $N^k$, uniformly for $k\geq 1$.
 \end{abstract}

\maketitle
% ----------------------------------------------------------------

%%%%%
\section{Introduction}
 The multiplication table problem, first posed by Erd\H{o}s in 1955, asks how many distinct entries occur in the $N\times N$ multiplication table. Writing $[N]:=\{1,\ldots,N\}$, let $M_2(N):=\#\{x_1x_2 : x_1,x_2\in[N]\}$. Ford \cite{for08} showed in 2008 that
 \[ M_2(N)\asymp \frac{N^2}{(\log N)^{\delta}(\log\log N)^{3/2}}, \]
 where $\delta:=1-\frac{1+\log\log 2}{\log 2} = 0.086\ldots$ is the multiplication table constant. Recently, Green and Sawhney \cite{GreenSawhneyMultiplicationTable,green2026proportionpermutationsfixingkset} announced an asymptotic formula for this quantity, with an oscillatory function appearing in the leading term.

 The problem extends naturally to higher dimensions. For $k\geq 2$, let $M_k(N):=\#\{x_1\cdots x_k : x_i\in[N]\text{ for all }i\}$. For fixed $k$, Koukoulopoulos \cite{kou10} showed in 2010 that, writing $Q(u):=\int_1^u\log t\,\mathrm{d}t=u\log u-u+1$, we have
 \[ M_k(N)\asymp_k \frac{N^k}{(\log N)^{Q((k-1)/\log k)}(\log\log N)^{3/2}}. \]
 These results concern the case where $k$ is fixed while $N\to\infty$. A natural complementary question is to understand what happens when the dimension $k$ is allowed to vary with $N$.

\subsection{Khovanskii's theorem and large dimensions}
 A natural framework for this problem comes from Khovanskii's theorem on iterated sumsets. Let $d\geq 1$ be an integer and let $A\subseteq\Z^d$ be a finite set. For each $k\geq 1$, define $kA := \{\ul{a}_1+\cdots+\ul{a}_k : \ul{a}_i\in A \text{ for all } i\}$. Khovanskii \cite{kho92} showed that, for sufficiently large $k$, the cardinality $|kA|$ is given by a polynomial $\mathcal{P}_A(x)\in\Q[x]$ of degree at most $d$; that is, $|kA|=\mathcal{P}_A(k)$ for all sufficiently large $k$. The coefficients of $\mathcal{P}_A(x)$ encode geometric and combinatorial properties of $A$; in particular, when $A-A$ generates $\Z^d$, its leading term is determined by the volume of the convex hull of $A$ \cite{curgol21,grashawal23,grasmiwal24}. This has also been generalized to abelian semigroups by Nathanson--Ruzsa \cite{natruz02}.

 To apply this to the multiplication table problem, consider the map
 \[ \logmap : [N] \longrightarrow \Z^{\pi(N)},\qquad n \longmapsto (v_p(n))_{p\leq N}. \]
 By uniqueness of prime factorization, this map is injective, and it transforms multiplication into addition. In particular, $M_k(N) = |k\logmap([N])|$. Thus the multiplication table problem in large dimensions becomes a problem about iterated sumsets in $\Z^{\pi(N)}$. By Khovanskii's theorem, there exists $k_N\in\N$ such that $M_k(N) = \mathcal{P}_N(k)$ for every $k\geq k_N$, where
 \[ \mathcal{P}_N(k) = c_N k^{\pi(N)} + O_N(k^{\pi(N)-1}). \]
 Since $\logmap([N])$ contains $\ul{0}$ and $\ul{e}_p$ for every prime $p\leq N$, its convex hull has dimension $\pi(N)$, and the leading coefficient is given by $c_N = \vol(\conv(\logmap([N])))$. This connection between Khovanskii's theorem and the multiplication table problem was first observed by Limbach--Scheidweiler--Triesch \cite{limbach2025effectivekhovanskiiehrhartpolytopes}.
 
 Our main goal will be to determine the asymptotic size of $c_N$ as $N\to\infty$.
 
\subsection{A smooth variant}
 We start by studying a variant in which the entries in the multiplication table are restricted to smooth numbers. Let $P^{+}(n)$ denote the largest prime factor of $n$, and define
 \[ M_k(N,y) := \#\{x_1\cdots x_k : x_i\in[N],\ P^{+}(x_i)\leq y\text{ for all }i\}. \]
 As usual, write $\Psi(x,y):=\#\{n\leq x : P^{+}(n)\leq y\}$. See \cite{MR2467549} for more details on these numbers. 
 
 For fixed $N$ and $2\leq y\leq N$, Khovanskii's theorem gives
 \[ M_k(N,y)=c_{N,y}k^{\pi(y)}+O_{N,y}(k^{\pi(y)-1}) \]
 for all sufficiently large $k$, where $c_{N,y}$ is the volume of the convex hull of the exponent vectors $(v_p(n))_{p\leq y}$ of the $y$-smooth integers up to $N$. In Section \ref{smooth_section}, we first show that, throughout the range $y=o(\log N)$,
 \[ c_{N,y} \sim \frac{1}{\pi(y)!}\prod_{p\leq y}\frac{\log N}{\log p}. \]
 Thus, in terms of volume, the convex hull is asymptotically as large as the ambient weighted simplex
 \[ \Sigma(N,y) := \bigg\{\ul{u}=(u_p)_{p\leq y}\in\R_{\geq 0}^{\pi(y)} : \sum_{p\leq y} u_p\log p\leq\log N\bigg\}. \]

 In the smaller range $2\leq y\leq\sqrt{\log N\log\log N}$, a result of Ennola \cite[Theorem 1]{MR244175} gives
 \[ \Psi(N,y) \sim \frac{1}{\pi(y)!} \prod_{p\leq y} \frac{\log N}{\log p}, \]
 and hence $c_{N,y} \sim \Psi(N,y)$. It follows that, for $k$ sufficiently large in terms of $N$ and $y$,
 \[ M_k(N,y)\sim k^{\pi(y)}\Psi(N,y) \sim \Psi(N^k,y). \]
 On the other hand, Mehdizadeh \cite[Theorem 1.2]{MR4177515} proved that
 \[ M_2(N,y) \sim \Psi(N^2,y) \]
 in the larger range $y\leq \exp\left(\frac{(\log N)^{1/3}}{(\log\log N)^{1/3+\varepsilon}}\right)$ for any small $\varepsilon > 0$. Our first result shows that this asymptotic in fact holds uniformly in the dimension, throughout the range $y=o(\log N)$.

 \begin{thm}\label{smooth_thm}
  Let $y=y(N)$ satisfy $y=o(\log N)$. Then, uniformly for $k\geq 1$,
  \[ M_k(N,y) \sim \Psi(N^k,y). \]
 \end{thm}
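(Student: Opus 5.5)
The upper bound is immediate. Every product $x_1\cdots x_k$ with $x_i\in[N]$ and $P^+(x_i)\le y$ is a $y$-smooth integer at most $N^k$, so $M_k(N,y)\le\Psi(N^k,y)$. The whole task is the matching lower bound. I would get it by showing that almost every $y$-smooth $n\le N^k$ factors as $n=x_1\cdots x_k$ with each $x_i\le N$ and $y$-smooth. I view this as a bin-packing problem. The items are the prime factors $p\mid n$, counted with multiplicity, of size $\log p\le\log y$. There are $k$ bins, each of capacity $\log N$.

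\emph{Step 1 (packing lemma).} I would show that every $y$-smooth $n\le N^k/2^k$ with $n$ \emph{not too poor in the prime $2$} can be packed. Run first-fit decreasing: place the primes $p>2$ in decreasing order, each into the first bin where it fits, and only then distribute the copies of $2$. Suppose some item fails to fit. Then every bin has residual capacity smaller than that item. If the item is a copy of $2$, every bin is already filled beyond $\log N-\log 2$, which forces $\log n>k(\log N-\log 2)$, a contradiction. So failures can only happen before the $2$'s are placed, where the residuals are below $\log y$. To avoid even this, I would simply use the crude version of the argument: whenever $\log n\le k(\log N-\log y)$, first-fit decreasing always succeeds, because a failure would force every bin above $\log N-\log y$. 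This gives
\[ M_k(N,y)\ \ge\ \Psi(N^k/y^k,\,y), \]
and the refinement with the prime $2$ is kept as a fallback if the loss $y^k$ turns out to be too costly.

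\emph{Step 2 (comparing $\Psi$ at nearby points).} It remains to show $\Psi(x/d,y)\sim\Psi(x,y)$ for $x=N^k$ and $d=y^k$, uniformly in $k\ge1$. Heuristically, Hildebrand--Tenenbaum's local behaviour $\Psi(x/d,y)\approx d^{-\alpha}\Psi(x,y)$ applies, with saddle point $\alpha=\alpha(x,y)$. In the range $y\le\log x$ one has $\alpha\sim\log(1+y/\log x)/\log y\ll y/(\log x\log y)$. Hence
\[ d^{-\alpha}=\exp\big(-k\alpha\log y\big)=\exp\big(-O(ky/\log N^k)\big)=\exp\big(-O(y/\log N)\big)\to 1, \]
since $y=o(\log N)$. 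Note that this is uniform in $k$, because $\log x=k\log N$ cancels the factor $k$.

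To make this rigorous I would avoid asymptotic saddle-point formulas and use a one-sided elementary argument instead. By Rankin's trick applied to the difference,
\[ \Psi(x,y)-\Psi(x/d,y)=\#\{n\ y\text{-smooth}:\ x/d<n\le x\}. \]
I would bound this by a lattice-point comparison in the simplex $\Sigma$. Map $n\mapsto n\cdot 2^{j}$ for a suitable $j$ with $2^j\le d$; this is injective between the relevant sets. This should give $\Psi(x/d,y)\ge(1-o(1))\Psi(x,y)$ as soon as $d^{\alpha}=1+o(1)$. If the elementary route fails, I would instead invoke the uniform estimate $\Psi(x/d,y)=\Psi(x,y)d^{-\alpha}\big(1+O(\tfrac1u+\tfrac{\log y}{y})\big)$ in the Hildebrand--Tenenbaum range. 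For very small $y$, one can alternatively use de Bruijn/Ennola lattice-point expansions.

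\emph{Main obstacle.} Step 2 is where the difficulty lies, specifically its uniformity. A naive sandwich between simplex volumes, $\vol\Sigma(L)\le\Psi\le\vol\Sigma(L+\sum_{p\le y}\log p)$, loses a factor $\exp(O(y^2/(\log y\,\log N)))$. That factor is not $1+o(1)$ when $y^2\gg\log N$ and $k$ is bounded, so the comparison must be genuinely local, of the form "$\Psi(x/d,y)/\Psi(x,y)$ versus $d^{-\alpha}$", rather than a comparison with volumes. If the available local estimate only holds for $d\le y$, I would split $y^k$ as $k$ factors of $y$ and track the accumulated error. Alternatively, I would use the refined packing from Step 1, which loses only $2^k$ and makes the needed $d$ smaller.
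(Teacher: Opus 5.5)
Your Step 1 is correct and is essentially the paper's argument. The paper peels off a maximal prefix of the increasingly sorted prime factors and gets $M_k(N,y)\ge\Psi(N^k/y^{k-1},y)$; your packing bound $\Psi(N^k/y^k,y)$ is just as good.

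The gap is Step 2. You rightly identify it as the crux, but you never establish it uniformly in $k$ in the range $\sqrt{\log N}\lesssim y=o(\log N)$, and none of the routes you list works as stated.
\begin{itemize}
\item The injection $n\mapsto 2^j n$ maps $y$-smooth integers $\le x/2^j$ into $y$-smooth integers $\le x$. So it only yields the trivial inequality $\Psi(x/2^j,y)\le\Psi(x,y)$. The useful direction would need most $y$-smooth $n\le x$ to be divisible by $2^j$. Since $\#\{n\le x: P^+(n)\le y,\ 2^j\mid n\}=\Psi(x/2^j,y)$, that is just a restatement of the claim. The same objection applies to your ``refined packing'' with copies of $2$.
\item The estimate $\Psi(x/d,y)=\Psi(x,y)d^{-\alpha}(1+O(1/u+\log y/y))$, as you quote it with no further correction factor, is the Hildebrand--Tenenbaum local theorem. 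It is proved only for $1\le d\le y$, not for $d=y^k$.
\item Splitting $y^k$ into $k$ factors $y$ and iterating accumulates a factor $\exp(O(k\log y/y))$. This is not $1+o(1)$ uniformly in $k$.
\end{itemize}

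The paper closes this step in two ranges. For $y\le\sqrt{\log N}$ it uses Ennola's asymptotic; this is in effect your simplex-volume sandwich, whose loss $\exp(O(y^2/(k\log y\log N)))$ is harmless there. For $\sqrt{\log N}<y=o(\log N)$ it uses the large-$d$ local estimate of de la Bret\`eche--Tenenbaum (Th\'eor\`eme 2.4(ii)) with $d=y^{k-1}$ and $t=\log d/\log y=k-1$. That estimate carries an extra factor $(1-t^2/(u^2+\overline{u}^2))^{b\overline{u}}$, which has to be controlled:
\begin{itemize}
\item here $t/u\le\log y/\log N$, so $\overline{u}t^2/u^2\le y\log y/(\log N)^2=o(1)$;
\item the error term $O(\log y/y+\log y/\log N)$ is uniform in $k$;
\item $y^{-(k-1)\alpha}=1+o(1)$, exactly as in your heuristic.
\end{itemize}

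Alternatively, your two fallbacks are complementary and together give a proof. The iterated Hildebrand--Tenenbaum bound gives $\Psi(N^k,y)/\Psi(N^k/y^k,y)=\exp(O(y/\log N+k\log y/y))$. The volume sandwich gives a loss $\exp(O(y/\log N+y^2/(k\log y\log N)))$. Splitting at $k=y^{3/2}/(\log y\sqrt{\log N})$, each is $\exp(O(\sqrt{y/\log N}))=1+o(1)$ in its own range. As written, though, the proposal never verifies uniformity in $k$, which is the whole content of the theorem.
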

 
 Theorem \ref{smooth_thm} naturally raises the question of how far the range of the smoothness parameter can be extended.

 \begin{prob}
  How large can $y=y(N)$ be while
  \[ M_k(N,y)\sim \Psi(N^k,y) \]
  continues to hold uniformly for $k\geq 1$?
 \end{prob}

\subsection{Estimating \texorpdfstring{$c_N$}{c_N}}
 To obtain some initial bounds, consider the simplex
 \[ \Sigma_N:=\bigg\{\ul{u}=(u_p)_{p\leq N}\in\R_{\geq 0}^{\pi(N)}:\sum_{p\leq N}u_p\log p\leq\log N\bigg\}. \]
 By definition, $\logmap([N])=\Sigma_N\cap\Z_{\geq 0}^{\pi(N)}$. Hence
 \[ \conv\bigg(\ul{0},\bigg\{\bigg\lfloor\frac{\log N}{\log p}\bigg\rfloor \ul{e}_p : p\leq N\bigg\}\bigg) \subseteq \conv(\logmap([N])) \subseteq \Sigma_N, \]
 since $\lfloor\frac{\log N}{\log p}\rfloor \ul{e}_p \in \logmap([N])$ for every prime $p\leq N$. The volumes of the two outer simplices are respectively $\frac{1}{\pi(N)!}\prod_{p\leq N}\lfloor\frac{\log N}{\log p}\rfloor$ and $\frac{1}{\pi(N)!}\prod_{p\leq N}\frac{\log N}{\log p}$. Writing $V_N := \pi(N)!c_N$, we obtain
 \[ \prod_{p\leq N}\bigg\lfloor\frac{\log N}{\log p}\bigg\rfloor \leq V_N \leq \prod_{p\leq N}\frac{\log N}{\log p}. \]
 Taking logarithms, these bounds give
 \[ 2\log 2\,\frac{\sqrt{N}}{\log N} \lesssim \log V_N \lesssim \frac{N}{(\log N)^2}. \]
 Our main result shows that the correct order of magnitude is that of the lower bound, and determines the precise constant.

 \begin{thm}\label{thm2pi}
  We have
  \[ \log V_N\sim 2\pi\frac{\sqrt{N}}{\log N}. \]
 \end{thm}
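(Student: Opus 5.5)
The plan is to reduce $\log V_N$ to a one-dimensional variational problem that only sees primes near $\sqrt N$, and to prove matching upper and lower bounds for it.

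\emph{Localization.} The lower bound $\prod_{p\le N}\lfloor \log N/\log p\rfloor$ already shows where the mass lies. Its logarithm is dominated by the roughly $2\sqrt N/\log N$ primes in $(N^{1/3},N^{1/2}]$, each contributing $\log 2$. Primes with $p\le N^{1/2-\delta}$ contribute $O(N^{1/2-\delta})=o(\sqrt N/\log N)$ to both the upper and the lower bound. Primes $p>N^{1/2+\delta}$ have $u_p\le 1$ and cannot pair with any other prime above $N^{1/2-\delta}$, so they should contribute only a factor $1$ up to negligible error. I will therefore parametrize primes by $p=\sqrt N\,e^{x}$. On this scale $\log N/\log p=2-\frac{4x}{\log N}+\dots$, and the number of primes with parameter in $[x,x+dx]$ is about $\frac{2\sqrt N}{\log N}e^{x}\,dx$. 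The only relevant lattice points of $\conv(\logmap([N]))$ are $\ul 0$, $\ul e_p$, $2\ul e_p$ for $x\le 0$, and $\ul e_p+\ul e_q$ for $x_p+x_q\le 0$. The target is then
\[\log V_N=(1+o(1))\frac{2\sqrt N}{\log N}\,I,\qquad I=\pi,\]
where $I$ is the value of a limiting variational problem in which each prime at height $x$ receives a ``local volume factor'' $g(x)\in[1,2]$.

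\emph{Upper bound via Laplace duality.} Write $P=\conv(\logmap([N]))$ and $n=\pi(N)$. For any weights $\lambda_p>0$, set $h(\lambda)=\max_{m\le N}\sum_p\lambda_p v_p(m)$. Then
\[e^{-h}\vol(P)\le\int_{\R_{\ge0}^n}e^{-\lambda\cdot u}\,du=\prod_p\lambda_p^{-1}.\]
Rescaling $\lambda\mapsto t\lambda$ and optimizing in $t$, together with Stirling's formula for $n!$, gives
\[\log V_N\le\sum_{p\le N}\log\frac{h(\lambda)}{\lambda_p}+O(\log n).\]
The choice $\lambda_p=\log p$ recovers the trivial simplex bound. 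I will instead take $\lambda_p=\phi(x_p)$ with $\phi(x)+\phi(-x)\approx h$ and $2\phi(x)\le h$ for $x\le0$, and $\phi\equiv h$ beyond $N^{1/2+\delta}$; this keeps $h$ under control. The bound becomes $\frac{2\sqrt N}{\log N}\int\log\frac{h}{\phi(x)}e^{x}\,dx$, up to the constraint $\phi(x)+\phi(-x)\ge h$ on the pairs.

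The main obstacle is solving this optimization and showing its minimum is $\pi$. The involution $x\leftrightarrow -x$ couples $e^{x}$ with $e^{-x}$. My first attempt will be the ansatz $\phi(x)/h=\frac{1}{1+e^{-cx}}$-type logistic profiles, checking that the resulting integral evaluates, for example, to $\int_0^\infty \log(\coth)$-type integrals, which produce $\pi^2/\cdot$ or $\pi$. I also need to verify that the constraint from $m=pq$ with three or more prime factors near $\sqrt N$ is automatically satisfied; it is, since such $m$ exceed $N$.

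\emph{Lower bound.} I will exhibit an explicit subpolytope of $P$ whose volume matches. Pair each prime $p=\sqrt N e^{-y}<\sqrt N$ with the primes $q\le N/p$. I will take the convex hull of $\ul0$, the points $\ul e_p$ and $2\ul e_p$, and the points $\ul e_p+\ul e_q$ along a chosen ``staircase'' triangulation ordered by $x$. I will compute its volume as a product of local factors matching $h/\phi$ at the optimizer, with complementary-slackness conditions identifying which vertices are used.

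The error terms are the discreteness of primes (prime number theorem in short ranges near $\sqrt N$ suffices, with ranges of length $\sqrt N e^{x}/\log N$) and the truncations at $N^{1/2\pm\delta}$. Both should cost $o(\sqrt N/\log N)$ after letting $\delta\to0$ slowly.
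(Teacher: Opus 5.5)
Your upper-bound idea is sound and differs from the paper's, but the weights as you describe them do not work. Taking $\lambda_p\equiv h$ for $p>N^{1/2+\delta}$ is infeasible: $m=2p\le N$ then forces $\lambda_2\le 0$. Also, the multi-factor constraints that matter are $m=p\,q_1\cdots q_r$ with one factor near or above $\sqrt N$ and several small ones. Your check, three factors near $\sqrt N$, is vacuous.

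The fix is to use the logistic profile for every prime: $\lambda_p:=p^2/(p^2+N)$ with $h=1$. Put $G(t):=t^2/(t^2+N)$. Then $G(a)+G(b)\le G(ab)$ for real $a,b\ge 2$ with $ab\le N/2$ (for $N\ge 8$), and $G(c)+G(p)\le 1$ if and only if $cp\le N$. Merging the smaller prime factors of any $m\le N$ first gives $\sum_p\lambda_p v_p(m)\le 1$. Hence $P_N\subseteq\{\ul{u}\ge 0:\sum_p\lambda_p u_p\le 1\}$, and so $\log V_N\le\sum_{p\le N}\log(1+N/p^2)=(2\pi+o(1))\sqrt N/\log N$, using $\int_0^\infty\log(1+s^{-2})\,\mathrm{d}s=\pi$.

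This single simplex replaces the paper's whole upper-bound apparatus: the map $T$, the body $U_N$, and the mixed-volume tail estimate from Lemma \ref{mixedvol}. Indeed, Lemma \ref{dual-formula} shows that the paper's limiting value $\Phi_k$ is exactly a block-constant bound of this type. The difficulty is therefore not ``showing the minimum is $\pi$''. For the upper bound any feasible $\lambda$ suffices, and the logistic choice gives $\pi$ by a one-line integral.

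The genuine gap is the lower bound. ``Compute its volume as a product of local factors matching $h/\phi$ at the optimizer'' is exactly the statement that needs proof. Complementary slackness only identifies the dual optimizer; it produces no volume. For general polytopes the best containing coordinate simplex overshoots by a factor exponential in the dimension: for $[0,1]^n$ it gives $n^n$ against $\NVol=n!$. Here the effective dimension $\asymp\sqrt N/\log N$ has the same order as $\log V_N$, so a matching constant requires an actual volume computation for an explicit inner body.

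You specify no staircase triangulation, and the obstruction is that partner sets are nested rather than disjoint. A large prime at height $x$ pairs with all small primes below $\sqrt N e^{-x}$, so the local binomial-type factors interact across all scales.

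The paper handles this in four steps:
\begin{enumerate}
\item It groups large primes into geometric blocks, each sharing a common partner simplex $\Delta(m_i^-)$.
\item It computes $\NVol(L_N)$ exactly as an integral over the fixed polytope $\mathcal{R}_k$ of block sums (Lemma \ref{model-fiber}).
\item It extracts the exponential rate by Laplace's method (Lemma \ref{model-asymp}).
\item It proves, via strong Lagrange duality (Lemma \ref{dual-formula}), that this rate equals the infimum of the simplex bounds, i.e.\ that your upper bound is attained.
\end{enumerate}
As it stands, your argument yields only the trivial bound $\log V_N\ge(2\log 2+o(1))\sqrt N/\log N$.
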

 
 Consequently, by Khovanskii's theorem, for $k$ sufficiently large in terms of $N$,
 \[ M_k(N) = \exp\bigg((2\pi+o_{N\to\infty}(1))\frac{\sqrt{N}}{\log N}\bigg)\frac{k^{\pi(N)}}{\pi(N)!}. \]

 The appearance of $\sqrt{N}$ is suggested by the arithmetic structure of $\logmap([N])$. For $x\leq N$, write $P_x:=\conv(\logmap([x]))$, and let $P(N,y)$ denote the convex hull of the exponent vectors $(v_p(n))_{p\leq N}$ of the $y$-smooth integers up to $N$. Every integer $n\leq N$ either has all its prime factors at most $\sqrt{N}$, or has a unique prime factor $p>\sqrt{N}$, in which case $n=pm$ with $m\leq N/p$. Thus, the proof begins with the decomposition
 \[ P_N=\conv\bigg(P(N,\sqrt{N})\cup\bigcup_{\sqrt{N}<p\leq N}\big(\ul{e}_p+P_{N/p}\big)\bigg). \]
 After a diagonal change of variables, the pieces occurring in this decomposition can be compared with standard simplices. Grouping the primes $p>\sqrt{N}$ into geometric ranges then produces upper and lower model bodies whose logarithmic volumes can be analyzed explicitly. The resulting volume computation reduces, in the limit, to a Riemann sum, and the constant in Theorem \ref{thm2pi} arises from the integral
 \[ 2\int_0^1 \big((1+x^{-2})\log(1+x^{-2}) - x^{-2}\log(x^{-2})\big)\,\mathrm{d}x = 2\pi. \]

 It is also natural to ask how large $k$ must be before the polynomial behavior predicted by Khovanskii's theorem begins. Bounds for $k_N$ can be obtained from general results on the size and stability of sumsets in $\Z^d$. A result of Granville, Smith, and Walker \cite[Theorem 1.5]{grasmiwal24} gives
 \[ k_N\leq N^2V_N-N+1. \]
 %We believe this bound to be somewhat optimal in the general setting.

 \begin{ntt}
  We write $v_p(n)$ for the $p$-adic valuation of $n$, $P^{+}(n)$ for the largest prime factor of $n$, with the convention $P^{+}(1):=1$, and $\pi(x)$ for the number of primes up to $x$. For a set $A\subseteq\R^d$, we denote by $\conv(A)$ its convex hull and by $\vol(A)$ its Euclidean volume. If $A$ is a $d$-dimensional convex body, we write $\NVol(A):=d!\vol(A)$ for its normalized volume. We write $\ul{e}_i$ for the standard basis vectors, or $\ul{e}_p$ when the coordinates are indexed by primes.
 \end{ntt}

%%%%%%%%%%%%%%%%%%%%%%%%%%%%%%%%%%%%%%%%%%%%%%%%%%%%%%%%%%%%%%%%%%
\section{A smooth variant}\label{smooth_section}
 We begin by estimating the leading coefficient $c_{N,y}$ introduced in the previous section.

 \begin{lem}\label{smooth_leading_coeff}
  Let $y=y(N)$ satisfy $y=o(\log N)$. Then
  \[ c_{N,y}\sim \frac{1}{\pi(y)!}\prod_{p\leq y}\frac{\log N}{\log p}. \]
 \end{lem}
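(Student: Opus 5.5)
The upper bound is immediate: $\conv$ of the exponent vectors of $y$-smooth $n\le N$ lies in $\Sigma(N,y)$, whose volume is $\frac{1}{\pi(y)!}\prod_{p\le y}\frac{\log N}{\log p}$. So the work is in the lower bound. Write $d:=\pi(y)$ and $L_p:=\log N/\log p$. The plan is to exhibit a large simplex inside $P:=\conv\{(v_p(n))_{p\le y}: n\le N,\ P^+(n)\le y\}$. The trivial simplex $\conv(\ul{0},\lfloor L_p\rfloor \ul{e}_p)$ gives ratio $\prod_p \lfloor L_p\rfloor/L_p \ge \prod_{p\le y}(1-1/L_p)$. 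Since $y=o(\log N)$, we have $L_p\ge L_y=\log N/\log y$. Then
\[
\sum_{p\le y}\frac1{L_p}=\frac{1}{\log N}\sum_{p\le y}\log p=\frac{\theta(y)}{\log N}\ll \frac{y}{\log N}=o(1).
\]
So $\prod_{p\le y}(1-1/L_p)=\exp(-O(\theta(y)/\log N))\to 1$, and the lower bound matches the upper bound.

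I should double-check the step $\lfloor L_p\rfloor/L_p\ge 1-1/L_p$. This holds since $\lfloor L\rfloor> L-1$, and we need $L_p\ge 2$ say, which is true as $L_p\ge \log N/\log y\to\infty$. Then $\log\prod(1-1/L_p)\ge -2\sum 1/L_p$ once every $1/L_p\le 1/2$. With Chebyshev's bound $\theta(y)\ll y$, this gives $-2\theta(y)/\log N=o(1)$ precisely under $y=o(\log N)$. This is where the hypothesis is used, and it is essentially the only delicate point. The rest is the observation that $\lfloor L_p\rfloor\ul{e}_p$ corresponds to $p^{\lfloor L_p\rfloor}\le N$, which is $y$-smooth, and that $\ul{0}$ corresponds to $n=1$.

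Concretely, the steps are:
\begin{enumerate}
\item Establish the sandwich $\conv(\ul 0,\{\lfloor L_p\rfloor \ul e_p\})\subseteq P\subseteq\Sigma(N,y)$, exactly as in the inclusions displayed for $\Sigma_N$ in the introduction.
\item Compute both simplex volumes, $\frac1{d!}\prod\lfloor L_p\rfloor$ and $\frac1{d!}\prod L_p$.
\item Bound their ratio below by $\exp(-2\theta(y)/\log N)=1-o(1)$ using $\theta(y)\ll y=o(\log N)$.
\end{enumerate}
Finally, note that $c_{N,y}=\vol(P)$, with $P$ full-dimensional in $\R^{d}$, as stated in the introduction; this completes the proof.
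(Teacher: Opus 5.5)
Your proposal is correct and follows essentially the same route as the paper: the trivial sandwich between $\conv(\ul{0},\{\lfloor L_p\rfloor \ul{e}_p\})$ and $\Sigma(N,y)$, followed by showing that the ratio $\prod_{p\le y}\lfloor L_p\rfloor/L_p=\exp\bigl(O(\theta(y)/\log N)\bigr)$ tends to $1$ because $\theta(y)\ll y=o(\log N)$. The only difference is cosmetic: the paper writes $\lfloor L_p\rfloor=L_p\bigl(1+O(\log p/\log N)\bigr)$, while you use the explicit bounds $\lfloor L\rfloor>L-1$ and $\log(1-x)\ge -2x$ for $0\le x\le 1/2$.
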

 \begin{proof}
  By the trivial bounds,
  \[ \frac{1}{\pi(y)!}\prod_{p\leq y}\bigg\lfloor\frac{\log N}{\log p}\bigg\rfloor\leq c_{N,y}\leq\frac{1}{\pi(y)!}\prod_{p\leq y}\frac{\log N}{\log p}. \]
  Hence it is enough to show that
  \[ \prod_{p\leq y}\bigg\lfloor\frac{\log N}{\log p}\bigg\rfloor\sim\prod_{p\leq y}\frac{\log N}{\log p}. \]
  Uniformly for $p\leq y$, we have $\frac{\log N}{\log p}\to\infty$, since $y=o(\log N)$, and therefore
  \[ \bigg\lfloor\frac{\log N}{\log p}\bigg\rfloor=\frac{\log N}{\log p}\bigg(1+O\bigg(\frac{\log p}{\log N}\bigg)\bigg). \]
  Multiplying over $p\leq y$, we obtain
  \[ \prod_{p\leq y}\bigg\lfloor\frac{\log N}{\log p}\bigg\rfloor=\bigg(\prod_{p\leq y}\frac{\log N}{\log p}\bigg)\exp\bigg(O\bigg(\frac{1}{\log N}\sum_{p\leq y}\log p\bigg)\bigg). \]
  Since $\sum_{p\leq y}\log p\ll y=o(\log N)$, the exponential factor is $\exp(o(1))=1+o(1)$.
 \end{proof}

 We now prove Theorem \ref{smooth_thm}.
 
 \begin{proof}[Proof of Theorem \ref{smooth_thm}]
  We first prove
  \begin{equation}\label{smooth_upper_lower_bound}
    \Psi\bigg(\frac{N^k}{y^{k-1}},y\bigg)
    \leq M_k(N,y)
    \leq \Psi(N^k,y).
  \end{equation}
  The upper bound is immediate. For the lower bound, let $n\leq N^k/y^{k-1}$ be $y$-smooth. If $n\leq N$, then we are done. Otherwise, write $n=p_1\cdots p_\ell$ with $p_1\leq p_2\leq\cdots\leq p_\ell\leq y$. We can find $1\leq s<\ell$ such that
  \[ \prod_{i\leq s}p_i \leq N  < \prod_{i\leq s+1}p_i \leq y\prod_{i\leq s}p_i. \]
  Hence, if we let $d:=\prod_{i\leq s}p_i$, then $N/y<d\leq N$, and therefore
  \[ \frac{n}{d} \leq \frac{N^{k-1}}{y^{k-2}}. \]
  Proceeding in the same way with $n/d$, and iterating this argument, we obtain a factorization of $n$ into $k$ integers, each at most $N$. This proves \eqref{smooth_upper_lower_bound}.

%%%%%%%%%
  It remains to prove that, for $y=o(\log N)$,
  \[ \Psi\bigg(\frac{N^k}{y^{k-1}},y\bigg) \sim \Psi(N^k,y), \]
  uniformly in $k$. Suppose first that $y\leq \sqrt{\log N}$. By Ennola's result \cite[Theorem 1]{MR244175},
  \begin{align*}
   \Psi\bigg(\frac{N^k}{y^{k-1}},y\bigg) &= \bigg(1+O\bigg(\frac{y^2}{\log y\log(N^k/y^{k-1})}\bigg)\bigg) \frac{1}{\pi(y)!} \prod_{p\leq y} \frac{\log(N^k/y^{k-1})}{\log p} \\
   &= \bigg(1+O\bigg(\frac{y^2}{\log y\log N^k}\bigg)\bigg) \bigg(1-\frac{(k-1)\log y}{k\log N}\bigg)^{\pi(y)} \frac{1}{\pi(y)!} \prod_{p\leq y}\frac{\log N^k}{\log p} \\
   &= \bigg(1+O\bigg(\frac{y^2}{\log y\log N^k}\bigg)\bigg) \bigg(1+O\bigg(\frac{\pi(y)\log y}{\log N}\bigg)\bigg) \Psi(N^k,y).
  \end{align*}
  Both error terms are $o(1)$ uniformly in $k$, giving the desired asymptotic.
  
  Now suppose that $\sqrt{\log N}<y=o(\log N)$. Applying \cite[Théorème 2.4(ii), case $m=1$]{MR2166385} with $x=N^k$ and $d=y^{k-1}$, we obtain
  \[ \frac{\Psi(N^k/y^{k-1},y)}{\Psi(N^k,y)} = \bigg(1+O\bigg(\frac{\log y}{y}+\frac{\log y}{\log N}\bigg)\bigg)\bigg(1-\frac{(k-1)^2}{u^2+\overline{u}^{\,2}}\bigg)^{b\overline{u}}\frac{1}{y^{(k-1)\alpha}}, \]
  where $u=k\log N/\log y$, $\overline{u}=\min\{u,y/\log y\}=y/\log y$, $\alpha=\alpha(N^k,y)$, and $b$ is a quantity bounded above and below by positive absolute constants. Indeed, in the notation of \cite{MR2166385}, $t=k-1$ and $u_y=\overline{u}+\log y/\log(u+2)$, so $1/u_y+t/u\ll \log y/y+\log y/\log N$. Moreover,
  \[ \overline{u}\frac{(k-1)^2}{u^2+\overline{u}^{\,2}}\leq \frac{y\log y}{(\log N)^2}=o(1), \]
  and hence the middle factor is $1+o(1)$ uniformly in $k$. On the other hand, by \cite[Lemme 3.1]{MR2166385},
  \[ \alpha=\bigg(1+O\bigg(\frac{1}{\log y}\bigg)\bigg)\frac{\log(1+\frac{y}{k\log N})}{\log y}, \]
  so $y^{-(k-1)\alpha}=1+o(1)$ uniformly in $k$. Therefore,
  \[ \Psi\bigg(\frac{N^k}{y^{k-1}},y\bigg)\sim\Psi(N^k,y) \]
  in this range as well. Together with \eqref{smooth_upper_lower_bound}, this completes the proof.
 \end{proof}

%%%%%%%%%%%%%%%%%%%%%%%%%%%%%%%%%%%%%%%%%%%%%%%%%%%%%%%%%%%%%%%%%%
\section{Estimating \texorpdfstring{$c_N$}{c\_N}}
 For each $N\geq 2$, we work in the space $\R^{\pi(N)}$, whose coordinates are indexed by the primes $p\leq N$. We write $\ul{e}_p\in \R^{\pi(N)}$ for the standard basis vector in the $p$-coordinate. If $x\leq N$, we identify $\R^{\pi(x)}$ with the coordinate subspace of $\R^{\pi(N)}$ spanned by $\{\ul{e}_p:p\le x\}$.

 For $n\le N$, we recall
 \[ \logmap(n) = (v_p(n))_{p\leq N}\in \R^{\pi(N)}. \]
 For $x\leq N$, define
 \[ P_x := \conv(\logmap([x]))\subseteq \R^{\pi(N)}, \qquad \Delta(x) := \conv(\ul{0},\{\ul{e}_p : p\le x\})\subseteq \R^{\pi(N)}, \]
 so that $V_N  = \pi(N)!\vol(P_N)$. If we denote by
 \[ \NVol(K) := d! \vol(K), \]
 the normalized volume for a $d$-dimensional body $K$, then we have $V_N = \NVol(P_N)$.
 
 \subsection{Upper and lower bounds}
 Define also, for $y\leq N$,
 \[ \mathcal{S}(N,y) := \{n\leq N : P^{+}(n) \leq y\},\qquad P(N,y) := \conv(\logmap(\mathcal{S}(N,y))) \subseteq \R^{\pi(N)}. \]
 
 \begin{lem}\label{PNdecomp}
  For $N\geq 2$, we have
  \[ P_N = \conv\bigg(P(N,\sqrt{N})\cup \bigcup_{\sqrt{N}<p\leq N} (\ul{e}_p + P_{N/p}) \bigg). \]
 \end{lem}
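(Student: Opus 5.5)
The plan is to prove the two inclusions separately. Both rest on the fact that $P_N$ is the convex hull of the finite set $\logmap([N])$. So it suffices to check two things: every generator $\logmap(n)$, $n\le N$, lies in the right-hand side, and every piece appearing on the right-hand side lies in $P_N$.

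For $\supseteq$, first note $\mathcal{S}(N,\sqrt{N})\subseteq[N]$, so $\logmap(\mathcal{S}(N,\sqrt N))\subseteq\logmap([N])$. Taking convex hulls gives $P(N,\sqrt N)\subseteq P_N$. Next fix a prime $\sqrt N<p\le N$ and any $m\le N/p$. Then $pm\le N$ and $\logmap(pm)=\ul{e}_p+\logmap(m)$, because $\logmap$ turns multiplication into addition. Hence $\ul{e}_p+\logmap([N/p])\subseteq\logmap([N])$. Translation commutes with taking convex hulls, so
\[ \ul{e}_p+P_{N/p}=\conv(\ul{e}_p+\logmap([N/p]))\subseteq P_N. \]
Since $P_N$ is convex, it contains the convex hull of the union of all these pieces.

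For $\subseteq$, take $n\le N$ and split into two cases. If $P^{+}(n)\le\sqrt N$, then $n\in\mathcal{S}(N,\sqrt N)$, so $\logmap(n)\in P(N,\sqrt N)$. Otherwise let $p=P^{+}(n)>\sqrt N$. Then $p^2>N\ge n$, so $v_p(n)=1$, and $n=pm$ with $m=n/p\le N/p$. This gives $\logmap(n)=\ul{e}_p+\logmap(m)\in\ul{e}_p+P_{N/p}$. Thus every generator of $P_N$ lies in the union on the right, and taking convex hulls finishes the inclusion.

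One small point needs care: $P_{N/p}$ must be read as $\conv(\logmap([\lfloor N/p\rfloor]))$. This set is nonempty since it contains $\logmap(1)=\ul{0}$, and it lives in the coordinate subspace $\R^{\pi(N/p)}\subseteq\R^{\pi(N)}$, consistent with the identification fixed earlier. There is no real obstacle here. The only substantive step is the observation that a prime factor exceeding $\sqrt N$ of some $n\le N$ is unique and occurs to the first power.
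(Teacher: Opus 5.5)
Your proposal is correct and follows essentially the same argument as the paper: you check both inclusions on the generators $\logmap(n)$, split on whether $P^{+}(n)\le\sqrt{N}$, and use that a prime $p>\sqrt{N}$ dividing $n\le N$ divides it exactly once. Your remark that $P_{N/p}$ means $\conv(\logmap([\lfloor N/p\rfloor]))$ is a harmless clarification that the paper leaves implicit.
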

 \begin{proof}
  Let $n\leq N$. If every $P^{+}(n) \leq \sqrt{N}$, then $\logmap(n)\in P(N,\sqrt{N})$ by definition. Otherwise let $p := P^{+}(n)$. Since $p>\sqrt{N}$, we have $p^2>N$, hence $v_p(n)=1$. Therefore $n=pm$ for some integer $m\leq N/p$, and
  \[ \logmap(n)=\ul{e}_p+\logmap(m)\in \ul{e}_p+P_{N/p}. \]
  This shows that every vertex $\logmap(n)$ of $P_N$ lies in the right-hand side, so
  \[ P_N\subseteq \conv\bigg(P(N,\sqrt{N})\cup \bigcup_{\sqrt{N}<p\leq N} (\ul{e}_p + P_{N/p}) \bigg). \]

  Conversely, if $\sqrt{N}<p\leq N$ and $m\leq N/p$, then $pm\leq N$, so
  \[ \ul{e}_p+\logmap(m)=\logmap(pm)\in P_N. \]
  Thus every set $\ul{e}_p+P_{N/p}$ is contained in $P_N$, and clearly $P(N,\sqrt{N})\subseteq P_N$. Hence the reverse inclusion follows.
 \end{proof}

 Fix $0<\eps<1$ and $q\in(0,1)$, and let $k$ be the unique integer such that
 \[ q^k\geq \eps > q^{k+1}. \]
 For $1\leq i\leq k$, define
 \[ \mathcal{P}_i:= \bigg\{p\in \P:\frac{\sqrt{N}}{q^{i-1}}<p\leq \frac{\sqrt{N}}{q^i}\bigg\} \] %r_i = P_i
 and 
 \[ m_i^{-} := \lfloor q^i \sqrt{N}\rfloor, \qquad m_i^{+}:=\lfloor q^{i-1}\sqrt{N}\rfloor. \]
 Define
 \begin{align}
  L_N = L_N(\eps, q) &:= \conv\bigg(2\Delta(\sqrt{N})\cup \bigcup_{i=1}^k \bigcup_{p\in \mathcal{P}_i} (\ul{e}_p + \Delta(m_i^{-}))\bigg), \label{Lbody} \\
  U_N = U_N(\eps, q) &:= \conv\bigg(2\Delta(\sqrt{N})\cup \bigcup_{i=1}^k \bigcup_{p\in \mathcal{P}_i} (\ul{e}_p + \Delta(m_i^{+}))\bigg). \label{Ubody}
 \end{align}

 \begin{prop}\label{ublbPN}
  Let $N\geq 2$. For every $0<\eps<1$ and $q\in(0,1)$, we have
  \[ \log\NVol(L_N) \leq \log V_N \leq \log\NVol(U_N) + O\bigg(\eps\log\bigg(\frac{1}{\eps}\bigg)\frac{\sqrt{N}}{\log N}\bigg) + O\bigg(\frac{\sqrt{N}}{(\log N)^2}\bigg). \]
 \end{prop}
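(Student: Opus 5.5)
The plan is to prove the two inequalities separately. The lower bound comes from a direct inclusion of $L_N$ into $P_N$, followed by a cone/join argument for the primes that $L_N$ does not see. The upper bound comes from a diagonal change of variables that turns $P(N,\sqrt N)$ into $2\Delta(\sqrt N)$ and each $P_{N/p}$ into a subset of a standard simplex, plus a separate treatment of the primes $p>\sqrt N/\eps$.

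\emph{Lower bound.} I would first check the inclusion $L_N\subseteq P_N$ on the generating points.
\begin{itemize}
\item For $p\le\sqrt N$ we have $p^2\le N$, so $2\ul e_p=\logmap(p^2)\in P_N$; together with $\ul 0=\logmap(1)$ this gives $2\Delta(\sqrt N)\subseteq P_N$.
\item For $p\in\mathcal P_i$ and a prime $p'\le m_i^-\le q^i\sqrt N< N/p$, we have $pp'\le N$. Hence $\ul e_p+\ul e_{p'}=\logmap(pp')\in P_N$, and likewise $\ul e_p=\logmap(p)$.
\end{itemize}
So $L_N\subseteq P_N$. However, $L_N$ only spans the coordinates $p\le \sqrt N/q^k$. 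I therefore read $\NVol(L_N)$ as its normalized volume in that coordinate subspace, of dimension $d_1$.

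For the remaining set $R$ of primes $p>\sqrt N/q^k$ (with $|R|=d_2$), note that $\ul e_p\in P_N$. Hence $P_N\supseteq\conv(L_N\cup\Delta_R)$, where $\Delta_R=\conv(\ul 0,\{\ul e_p\}_{p\in R})$. Both $L_N$ and $\Delta_R$ are convex, contain $\ul 0$, and lie in complementary coordinate subspaces. The join then contains $\bigcup_{t\in[0,1]}\big((1-t)L_N\times t\Delta_R\big)$. Its volume is
\[
\vol(L_N)\vol(\Delta_R)\int_0^1 d_2(1-t)^{d_1}t^{d_2-1}\,\mathrm dt=\vol(L_N)\vol(\Delta_R)\frac{d_1!\,d_2!}{(d_1+d_2)!},
\]
so in normalized terms $\NVol(\conv(L_N\cup\Delta_R))=\NVol(L_N)\NVol(\Delta_R)=\NVol(L_N)$. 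This gives $\log \NVol(L_N)\le\log V_N$.

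\emph{Upper bound, change of variables.} Let $T$ be the diagonal map multiplying the $p$-coordinate by $\lambda_p:=\log p/\log\sqrt N$ for $p\le\sqrt N$, and leaving the coordinates $p>\sqrt N$ unchanged.
\begin{itemize}
\item Since $\logmap(\mathcal S(N,\sqrt N))\subseteq\Sigma_N$, we get $T P(N,\sqrt N)\subseteq 2\Delta(\sqrt N)$.
\item For $\sqrt N<p\le N$, every $m\le N/p$ satisfies $\sum_{\ell}v_\ell(m)\log \ell\le\log(N/p)<\log\sqrt N$, and $m$ has all its prime factors at most $\sqrt N$. Hence $TP_{N/p}\subseteq\Delta(N/p)$.
\item For $p\in\mathcal P_i$ we have $N/p<q^{i-1}\sqrt N$, so $\Delta(N/p)\subseteq\Delta(m_i^+)$.
\end{itemize}
By Lemma~\ref{PNdecomp}, this gives
\[
TP_N\subseteq W:=\conv\Big(U_N\cup\bigcup_{p\in R'}(\ul e_p+\Delta(\eps\sqrt N))\Big),
\]
where $R'$ is the set of primes in $(\sqrt N/q^k,N]$ and we used $N/p<q^k\sqrt N\le\ldots$, adjusting $\eps\sqrt N$ versus $q^k\sqrt N$ up to a factor $q$. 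Since $\det T^{-1}=\prod_{p\le\sqrt N}\log\sqrt N/\log p$, and by partial summation with the prime number theorem
\[
\sum_{p\le x}\log\frac{\log x}{\log p}\approx\int_2^x\log\frac{\log x}{\log t}\,\frac{\mathrm dt}{\log t}\ll\frac{x}{(\log x)^2},
\]
we obtain $\log V_N\le\log\NVol(W)+O(\sqrt N/(\log N)^2)$.

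\emph{Main obstacle.} The hard step is showing $\log\NVol(W)\le\log\NVol(U_N)+O(\eps\log(1/\eps)\sqrt N/\log N)$. The coarse inclusion $W\subseteq U_N\times\Delta_{R'}$ loses a binomial factor of size about $\binom{\pi(N)}{\pi(\sqrt N/\eps)}$, which is far too large, so it cannot be used. I would instead use Fubini in the $R'$-coordinates $\ul z$, writing $s=|\ul z|_1$. The fiber of $W$ over $\ul z$ lies in $(1-s)U_N+s\,\Delta(\eps\sqrt N)$.

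Next I would split the $U_N$-coordinates into $A$ (primes $\le\eps\sqrt N$) and $B$ (the rest), so that $\Delta(\eps\sqrt N)$ lives in the $A$-block. The fiber then has $B$-part essentially contained in $(1-s)\pi_B$-type slices, which scale like $(1-s)^{|B|}$ exactly as in a join. The $A$-part grows by at most a bounded dilation of $U_N\cap\R^A\supseteq 2\Delta_A$. Integrating in $s$ should reproduce the join identity with $U_N$, up to a multiplicative factor depending only on the $A$-block. I expect that factor to be at most $(C/\eps)^{|A|}$ or a similar binomial, which gives the error $O(|A|\log(1/\eps))=O(\eps\log(1/\eps)\sqrt N/\log N)$ by the prime number theorem.

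If the fiber bound proves too lossy, my fallback is to compare $W$ with $\conv(U_N\cup \Delta_{R'}+\Delta_A)$. I would then bound the extra volume created by the Minkowski-type enlargement in the $A$-coordinates directly, using that $2\Delta_A\subseteq U_N$.
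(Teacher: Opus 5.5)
Your lower bound and your change of variables $T$ (with its determinant estimate) are correct and match the paper; your join identity is Lemma \ref{Nvol-comp} in slightly greater generality. The gap is the tail step. The body $W$ you reduce to is genuinely too large, so the inequality $\log\NVol(W)\le\log\NVol(U_N)+O(\eps\log(1/\eps)\sqrt N/\log N)$ is false, and no fiber or Fubini estimate can prove it.

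To see this, let $A$ be the primes up to $\eps\sqrt N$ and $R'$ the primes in $(\sqrt N/q^k,N]$. Then $W$ contains $\ul 0$ and every $\ul e_p+\ul z$ with $p\in R'$, $\ul z\in\Delta(\eps\sqrt N)$. Hence $W$ contains the set $\{(\ul x,\ul y)\in\R_{\geq 0}^{A}\times\R_{\geq 0}^{R'}:\|\ul x\|_1\le\|\ul y\|_1\le 1\}$, whose normalized volume is $\binom{|A|+|R'|-1}{|A|}$. Joining with $\conv(\ul 0,\{\ul e_\ell\})$ over the remaining coordinates of $U_N$ (all these $\ul e_\ell$ lie in $U_N$) does not change this. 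Since $|A|\sim 2\eps\sqrt N/\log N$ and $|R'|\sim N/\log N$, you get $\log\NVol(W)\ge |A|\log(|R'|/|A|)=(1+o(1))\,\eps\sqrt N$. By contrast, $\log\NVol(U_N)=O_{\eps,q}(\sqrt N/\log N)$ by Lemma \ref{model-asymp}.

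Your heuristic factor $(C/\eps)^{|A|}$ is where the reasoning breaks. $|R'|$ exceeds $\dim U_N=\pi(\sqrt N/q^k)\asymp\eps^{-1}\sqrt N/\log N$ by a factor $\asymp\eps\sqrt N$. So the $s$-integral concentrates where $1-s\approx \dim U_N/|R'|$, and there the $A$-block costs a factor $N^{(1/2+o(1))|A|}$, not $(C/\eps)^{|A|}$. Your fallback body contains $W$, so it fails for the same reason.

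The missing idea is that a prime $p$ close to $N$ only sees the very low-dimensional simplex $\Delta(N/p)$. Replacing every $T(P_{N/p})$, $p\in R'$, by the same $\Delta(\eps\sqrt N)$ throws this away. The paper instead groups the tail dyadically:
\[ \mathcal Q_j=\{p:\sqrt N/x_j<p\le \sqrt N/x_{j+1}\},\qquad x_j=2^{-j}\eps. \]
For $p\in\mathcal Q_j$ it uses $T(P_{N/p})\subseteq\Delta(\lfloor x_j\sqrt N\rfloor)$. It then applies Lemma \ref{mixedvol} blockwise (Corollary \ref{mixedvol-iter}), which is the rigorous form of your fiber computation, and bounds the loss by $\sum_j\log\binom{d_j+|\mathcal Q_j|}{|\mathcal Q_j|}$ with $d_j=\pi(x_j\sqrt N)$.

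This works because $d_j\ll x_j\sqrt N/\log N$ shrinks as fast as $|\mathcal Q_j|\ll x_j^{-1}\sqrt N/\log N$ grows. The ratio $|\mathcal Q_j|/d_j\ll x_j^{-2}$ is therefore controlled by $x_j$ alone, rather than being of size $\sqrt N$ as in your single block. Each block with $x_j\sqrt N\ge N^{1/4}$ then costs $\ll x_j\log(1+x_j^{-2})\sqrt N/\log N$, and the geometric sum gives $O(\eps\log(1/\eps)\sqrt N/\log N)$. The remaining $O(\log N)$ blocks with $x_j\sqrt N<N^{1/4}$ cost $O(N^{1/4}(\log N)^2)$ in total.
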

 
 To prove Proposition \ref{ublbPN}, we will need the following lemmas.
 
 %%%%%%%%%%%%%%%%%%%%%%%%%%%%%%%%%%%%%%%%%%%%%%%%%%%%%%%%%%%%%%%%%%
 \begin{lem}\label{outsimp}
  For $m, y \leq N$, recall
  \[ \Sigma(m,y) = \bigg\{\ul{u}=(u_p)_{p\leq N}\in \R_{\geq 0}^{\pi(N)} : \sum_{p\leq y} u_p\log p\le \log m,\ u_p = 0 \text{ for }p>y\bigg\}. \]
  Then $P(m,y)\subseteq \Sigma(m,y)$. Moreover, if $T:\R^{\pi(N)}\to \R^{\pi(N)}$ is the diagonal map
  \[ T(u_p):=\frac{\log p}{\log y}\,u_p \qquad (p\leq y) \]
  that acts as the identity on the coordinates $p>y$, then
  \[ T(\Sigma(m,y)) = \frac{\log m}{\log y}\,\Delta(y). \]
 \end{lem}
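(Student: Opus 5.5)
Both claims are direct verifications from the definitions, and I would prove them in turn.

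For the inclusion $P(m,y)\subseteq\Sigma(m,y)$, I first note that $\Sigma(m,y)$ is convex. It is the intersection of the nonnegative orthant, the coordinate subspace $\{u_p=0 \text{ for } p>y\}$, and the half-space $\sum_{p\le y}u_p\log p\le\log m$. Since $P(m,y)$ is the convex hull of $\logmap(\mathcal{S}(m,y))$, it therefore suffices to check that each generating point lies in $\Sigma(m,y)$.

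Take $n\le m$ with $P^+(n)\le y$. Its exponent vector $\logmap(n)=(v_p(n))_p$ has nonnegative coordinates. It vanishes for $p>y$, since $n$ is $y$-smooth. Finally,
\[ \sum_{p\le y}v_p(n)\log p=\log n\le\log m. \]
So $\logmap(n)\in\Sigma(m,y)$, and taking convex hulls gives the inclusion.

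For the identity $T(\Sigma(m,y))=\frac{\log m}{\log y}\Delta(y)$, the key observation is that $T$ is a diagonal linear map with positive entries, hence invertible. Write $w=T(\ul{u})$, so that $w_p=u_p\log p/\log y$ for $p\le y$ and $w_p=u_p$ for $p>y$. The three defining conditions of $\Sigma(m,y)$ translate as follows:
\begin{itemize}
\item $u_p\ge 0$ if and only if $w_p\ge0$, because the scaling factors are positive;
\item $u_p=0$ for $p>y$ if and only if $w_p=0$ for $p>y$, because $T$ is the identity there;
\item $\sum_{p\le y}u_p\log p\le\log m$ if and only if $\sum_{p\le y}w_p\le \frac{\log m}{\log y}$.
\end{itemize}

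Thus $T(\Sigma(m,y))$ is exactly the set
\[ \Big\{w\ge0 : w_p=0 \text{ for } p>y,\ \sum_{p\le y} w_p\le \tfrac{\log m}{\log y}\Big\}. \]
It remains to identify this set with a dilated simplex. The simplex $\Delta(y)=\conv(\ul{0},\{\ul{e}_p:p\le y\})$ equals $\{w\ge0: w_p=0\ (p>y),\ \sum w_p\le1\}$; this is the standard description of the standard simplex as a polytope. Dilating by $\lambda=\log m/\log y\ge0$ gives precisely the set above, which completes the argument.

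The only points needing care are degenerate cases. If $m<2$, the right-hand side should be read as $\{\ul 0\}$, which still matches. We also need $y\ge 2$ so that $\log y>0$; this is implicit because some prime must satisfy $p\le y$.
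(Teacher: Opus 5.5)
Your proposal is correct and follows the paper's proof: you check that the generating points $\logmap(n)$ lie in the convex set $\Sigma(m,y)$, and you verify $T(\Sigma(m,y))=\frac{\log m}{\log y}\Delta(y)$ coordinatewise, with the invertibility of $T$ letting you write as equivalences what the paper proves as two separate inclusions. One small point: your degenerate-case remark is unnecessary and slightly inaccurate, since only $m=1$ gives $\{\ul{0}\}$, while for $1<m<2$ your main argument already applies with $\lambda=\log m/\log y>0$.
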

 \begin{proof}
  Since every $y$-smooth integer $n\leq m$ satisfies $\sum_{p\leq y} v_p(n)\log p = \log n\leq \log m$, we have $P(m,y)\subseteq \Sigma(m,y)$. Now for the equality, if $\ul{u}\in \Sigma(m,y)$, then
  \[ \sum_{p\leq y} T(u_p) = \frac{1}{\log y} \sum_{p\leq y} u_p\log p\leq \frac{\log m}{\log y}, \]
  so $T(\ul{u})\in \frac{\log m}{\log y}\Delta(y)$. Conversely, let $\ul{v} \in \frac{\log m}{\log y}\Delta(y)$. Then $\sum_{p\leq y} v_p\leq \frac{\log m}{\log y}$ and $v_p = 0$ for $p>y$. Define
  \[ u_p := \begin{cases}
             \dfrac{\log y}{\log p}\, v_p, &p\leq y, \\
             0, &p>y.
            \end{cases} \]
  Then
  \[ \sum_{p\leq y} u_p\log p = \log y \sum_{p\leq y} v_p \leq \log y\frac{\log m}{\log y} = \log m, \]
  so $\ul{u} \in \Sigma(m,y)$. Since $\ul{v} = T(\ul{u})$, this proves the reverse inclusion.
 \end{proof} 
 
 For the next two lemmas, we write $\ul{e}_1,\ldots, \ul{e}_n$ for the standard basis of $\R^n$.
 
 %%%%%%%%%%%%%%%%%%%%%%%%%%%%%%%%%%%%%%%%%%%%%%%%%%%%%%%%%%%%%%%%%%
 \begin{lem}\label{Nvol-comp}
  Let $K\subseteq \R^d\times \{0\}^s\subseteq \R^{d+s}$ be a $d$-dimensional convex body. Then
  \[ \NVol(\conv(K,\ul{e}_{d+1},\ldots,\ul{e}_{d+s})) = \NVol(K). \]
 \end{lem}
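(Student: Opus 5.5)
The plan is to show the identity by iterating a single-step version, i.e.\ by induction on $s$. It therefore suffices to handle one apex: if $K'\subseteq \R^{d'}\times\{0\}$ is a $d'$-dimensional convex body, then
\[ \NVol(\conv(K',\ul{e}_{d'+1})) = \NVol(K'). \]
Granting this, set $K_0:=K$ and $K_j:=\conv(K_{j-1},\ul{e}_{d+j})$. Since $\conv$ is associative, $K_s=\conv(K,\ul{e}_{d+1},\ldots,\ul{e}_{d+s})$. Each $K_j$ lies in $\R^{d+j}\times\{0\}^{s-j}$ and is $(d+j)$-dimensional, because it contains the $(d+j-1)$-dimensional body $K_{j-1}$ together with a point off its hyperplane. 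Applying the one-step claim $s$ times then gives the lemma.

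For the one-step claim, $\conv(K',\ul{e}_{d'+1})$ is the pyramid (cone) over the base $K'\subseteq\{x_{d'+1}=0\}$ with apex $\ul{e}_{d'+1}$, which sits at height $1$. I would compute its volume by slicing at height $t\in[0,1]$. The slice at height $t$ is $(1-t)K'+t\ul{e}_{d'+1}$, a translate of $(1-t)K'$. To see this, note that every point of the convex hull can be written as $(1-t)\ul{x}+t\ul{e}_{d'+1}$ with $\ul{x}\in K'$, using convexity of $K'$, and its last coordinate is exactly $t$. Fubini then gives
\[ \vol_{d'+1}(\conv(K',\ul{e}_{d'+1}))=\int_0^1(1-t)^{d'}\vol_{d'}(K')\,\mathrm{d}t=\frac{\vol_{d'}(K')}{d'+1}. \]
Multiplying by $(d'+1)!$ yields $(d'+1)!\vol_{d'+1}=d'!\vol_{d'}(K')$, which is the one-step identity.

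No step here is a real obstacle. The only point needing care is the description of the slices of the convex hull, namely that the hull equals the union of the segments joining $K'$ to the apex. This holds because $K'$ is convex, so any convex combination of base points collapses to a single base point.
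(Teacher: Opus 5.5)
Your proposal is correct and matches the paper's proof: reduce to a single apex by iteration, then use that $\conv(K',\ul{e}_{d'+1})$ is a pyramid of height $1$ over $K'$, so its volume is $\vol_{d'}(K')/(d'+1)$. The paper quotes this pyramid-volume formula directly, while you derive it by slicing, which is the only difference.
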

 \begin{proof}
  It is enough to prove that $\NVol(\conv(K,\ul{e}_{d+1})\bigr)=\NVol(K)$, since the general case then follows by iteration. The body $\conv(K,\ul{e}_{d+1})$ is a pyramid of height $1$ over the base $K$, hence
  \[ \vol_{d+1}(\conv(K,\ul{e}_{d+1})) = \frac{1}{d+1}\vol_d(K). \]
  Multiplying by $(d+1)!$, we get
  \[ (d+1)! \vol_{d+1}(\conv(K,\ul{e}_{d+1})) = d! \vol_d(K), \]
  which is exactly $\NVol(\conv(K,\ul{e}_{d+1})) =\NVol(K)$.
 \end{proof}
 
 %%%%%%%%%%%%%%%%%%%%%%%%%%%%%%%%%%%%%%%%%%%%%%%%%%%%%%%%%%%%%%%%%%
 \begin{lem}\label{mixedvol}
  Let $n\geq 1$, and let $Q\subseteq \R^n$ be a $q$-dimensional convex body for some $q\leq n$. Let $1\leq d\leq n$, and assume
  \[ K_1,\ldots,K_r\subseteq Q\cap (\R^d\times \{0\}^{n-d}) \]
  are convex bodies. Define
  \[ C:=\conv((Q,\ul{0}), (K_1,\ul{e}_{n+1}),\ldots,(K_r,\ul{e}_{n+r}))\subseteq \R^{n+r}. \]
  Then
  \[ \NVol(C)\leq \binom{d+r}{r}\NVol(Q). \]
 \end{lem}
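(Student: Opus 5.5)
The plan is to use the Cayley-trick description of $C$, bound each fiber by a Minkowski combination of two bodies, and integrate a mixed-volume expansion over the simplex. We may assume that $C$ is full-dimensional in its affine span. Let $q=\dim Q$; then $\dim C=q+r$ and $\NVol(C)=(q+r)!\vol_{q+r}(C)$, where $\vol_q(Q)$ is measured inside the affine span of $Q$.

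\textbf{Fiber description.} Parametrize the last $r$ coordinates by $\ul{t}=(t_1,\ldots,t_r)$ in the simplex $\Delta_r=\{t_i\ge 0,\ \sum t_i\le 1\}$, and write $s:=\sum_i t_i$. A point of $C$ with last coordinates $\ul{t}$ is a convex combination of a point of $(Q,\ul 0)$ with weight $1-s$ and points of $(K_i,\ul{e}_{n+i})$ with weights $t_i$. So the fiber of $C$ over $\ul{t}$ is
\[ F_{\ul t}=(1-s)Q+\sum_i t_iK_i\subseteq (1-s)Q+sK, \]
where $K:=\conv(K_1\cup\cdots\cup K_r)$. The inclusion holds because $\sum_i t_iK_i\subseteq sK$ by convexity of $K$. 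Since each $K_i\subseteq Q\cap(\R^d\times\{0\}^{n-d})$, which is convex, we get $K\subseteq Q$ and $\dim K=:d'\le d$. Moreover $F_{\ul t}$ lies in the affine span of $Q$, so by Fubini
\[ \vol_{q+r}(C)=\int_{\Delta_r}\vol_q(F_{\ul t})\,\mathrm d\ul t. \]

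\textbf{Mixed-volume bound on the fibers.} We expand with mixed volumes in the $q$-dimensional span:
\[ \vol_q((1-s)Q+sK)=\sum_{j=0}^{q}\binom{q}{j}(1-s)^{q-j}s^{j}\,V(Q[q-j],K[j]). \]
Two standard properties control the terms. First, $V(Q[q-j],K[j])=0$ whenever $j>d'$, because $K$ lies in a $d'$-dimensional affine subspace. Second, monotonicity of mixed volumes together with $K\subseteq Q$ gives $V(Q[q-j],K[j])\le V(Q[q])=\vol_q(Q)$. Hence
\[ \vol_q(F_{\ul t})\le \vol_q(Q)\sum_{j=0}^{d}\binom{q}{j}(1-s)^{q-j}s^j. \]

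\textbf{Integration over the simplex.} The pushforward of Lebesgue measure on $\Delta_r$ under $\ul t\mapsto s$ has density $s^{r-1}/(r-1)!$ on $[0,1]$. A Beta integral then gives
\[ \binom{q}{j}\int_0^1 (1-s)^{q-j}s^{j+r-1}\frac{\mathrm ds}{(r-1)!}=\frac{q!}{(q+r)!}\binom{j+r-1}{r-1}. \]
Multiplying by $(q+r)!$ and summing over $j\le d$, the hockey-stick identity yields
\[ \NVol(C)\le q!\vol_q(Q)\sum_{j=0}^{d}\binom{j+r-1}{r-1}=\binom{d+r}{r}\NVol(Q). \]

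\textbf{Main obstacle.} The delicate step is the fiber bound. The crude bound $F_{\ul t}\subseteq Q$ only gives $\binom{q+r}{r}$. To get $d$ in place of $q$, one must use that the mixed volumes vanish for $j>\dim K$. One must also verify that all the mixed volumes are taken in the common affine span of $Q$, which contains $K$ because $K\subseteq Q$.
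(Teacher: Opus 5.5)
Your proof is correct and follows the paper's argument: the fibration of $C$ over $\Delta_r$ with fibers $(1-s)Q+\sum_i t_iK_i$, a mixed-volume expansion of the fibers, the vanishing of mixed volumes beyond degree $d$, the monotonicity bound by $\vol_q(Q)$, and finally integration over the simplex followed by the hockey-stick identity. The only difference is cosmetic. You first enclose $\sum_i t_iK_i$ in $sK$ with $K=\conv(K_1\cup\cdots\cup K_r)$, so you need just the two-body expansion and a one-variable Beta integral against the density $s^{r-1}/(r-1)!$; the paper instead expands in all $r+1$ bodies and counts multi-indices via the Dirichlet integral and stars-and-bars, and both routes give exactly the same bound $\binom{d+r}{r}\NVol(Q)$.
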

 \begin{proof}
  Let $\pi:\R^{n+r}\to \R^r$ be projection onto the last $r$ coordinates. Then $\pi(C)=\Delta_r$, where
  \[ \Delta_r:=\{\ul{x}=(x_1,\ldots,x_r)\in \R_{\geq 0}^r : \|\ul{x}\|_1 \leq 1\}, \]
  where $\|\ul{x}\|_1 := x_1+\cdots+x_r$. The fiber of $C$ above $\ul{x}\in\Delta_r$ along $\pi$ is
  \[ C_{\ul{x}} := (1-\|\ul{x}\|_1 )Q+x_1K_1+\cdots+x_rK_r. \]
  Hence
  \[ \vol_{q+r}(C)=\int_{\Delta_r}\vol_q(C_{\ul{x}})\,\mathrm{d}\ul{x}. \]

  By Theorem and Definition 5.1.7 in Schneider \cite[Eq. (5.28)]{schneider93}, we have
  \begin{align*}
   \vol_q(C_{\ul{x}}) &=\sum_{m=0}^q \sum_{\alpha_1+\cdots+\alpha_r=m} \frac{q!}{(q-m)!\alpha_1!\cdots\alpha_r!} \\
   &\hspace{8em} \times V\big(Q[q-m],K_1[\alpha_1],\ldots,K_r[\alpha_r]\big)\, (1-\|\ul{x}\|_1)^{q-m}x_1^{\alpha_1}\cdots x_r^{\alpha_r},
  \end{align*}
  where $V(\cdots)$ is the mixed volume function, $Q[q-m]$ means $Q$ appears $q-m$ times, and $K_i[\alpha_i]$ means $K_i$ appears $\alpha_i$ times. Since every $K_i$ lies in the fixed $d$-dimensional subspace $\R^d\times \{0\}^{n-d}$, the mixed volume term vanishes whenever $m>d$. Also $K_i\subseteq Q$, so monotonicity of mixed volumes\footnote{This follows by the support function representation of mixed volume (equivalently, by multilinearity together with positivity).} gives
  \[ V\big(Q[q-m],K_1[\alpha_1],\ldots,K_r[\alpha_r]\big)\leq \vol_q(Q). \]
  Therefore
  \begin{align*}
   \vol_{q+r}(C) &\leq \vol_q(Q)\sum_{m=0}^d \sum_{\alpha_1+\cdots+\alpha_r=m}\frac{q!}{(q-m)!\alpha_1!\cdots\alpha_r!} \int_{\Delta_r}(1-\|\ul{x}\|_1)^{q-m}x_1^{\alpha_1}\cdots x_r^{\alpha_r}\,\mathrm{d}\ul{x}.
  \end{align*}

  The Dirichlet integral gives
  \[ \int_{\Delta_r}(1-\|\ul{x}\|_1)^{q-m}x_1^{\alpha_1}\cdots x_r^{\alpha_r}\,\mathrm{d}\ul{x}=\frac{(q-m)!\alpha_1!\cdots\alpha_r!}{(q+r)!}. \]
  Hence
  \[ (q+r)!\vol_{q+r}(C)\leq q!\vol_q(Q) \sum_{m=0}^d \#\{\ul{\alpha}\in \Z_{\geq 0}^r : \|\ul{\alpha}\|_1 = m\}. \]
  Since, by stars-and-bars, $\#\{\ul{\alpha}\in \Z_{\geq 0}^r : \|\ul{\alpha}\|_1 = m\} = \binom{m+r-1}{r-1}$, we obtain
  \[ \NVol(C)\le \NVol(Q)\sum_{m=0}^d\binom{m+r-1}{r-1}=\binom{d+r}{r}\NVol(Q). \qedhere \]
 \end{proof}
 
 %%%%%%%%%%%%%%%%%%%%%%%%%%%%%%%%%%%%%%%%%%%%%%%%%%%%%%%%%%%%%%%%%%
 The following corollary is an immediate iterated application of Lemma \ref{mixedvol}.
 
 \begin{cor}\label{mixedvol-iter}
  Let $n\geq 1$, and let $Q\subseteq \R^n$ be a $q$-dimensional convex body for some $q\leq n$. Let $J\geq 0$, and for each $0\leq j\leq J$, let $r_j\geq 0$ be an integer and let
  \[ K_{j,1},\ldots,K_{j,r_j}\subseteq Q\cap L_j \]
  be convex bodies, where $L_j\subseteq \R^n$ is a linear subspace of dimension at most $d_j$.

  Let $\ul{f}_{j,1},\ldots,\ul{f}_{j,r_j}$ be linearly independent vectors, linearly independent also from $\R^n$, and define
  \[ C:=\conv\Big((Q,\ul{0}),\ (K_{j,\ell},\ul{f}_{j,\ell}) : 0\leq j\leq J,\ 1\leq \ell\leq r_j\Big)\subseteq \R^n\times \R^{r_0+\cdots+r_J}. \]
  Then
  \[ \NVol(C)\leq \NVol(Q)\prod_{j=0}^J \binom{d_j+r_j}{r_j}. \]
 \end{cor}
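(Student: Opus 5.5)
We prove the corollary by induction on $J$, using Lemma \ref{mixedvol} once for each group $j$. The idea is to attach the groups $(K_{j,\ell},\ul{f}_{j,\ell})$ one at a time. At each stage, the body built so far serves as the new base body $Q$.

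We first reduce to a coordinate setting. The vectors $\ul{f}_{j,\ell}$ are linearly independent and independent from $\R^n$. So there is a linear map on $\R^n\times\R^{r_0+\cdots+r_J}$ that fixes $\R^n$ and sends the $\ul{f}_{j,\ell}$ to standard basis vectors $\ul{e}_{n+1},\ldots$. This map multiplies every volume by the same constant $|\det|$, applied to both sides. That does not quite match $\NVol(Q)$, which lives in $\R^n$ and is unaffected, so we take the statement in the normalization where the $\ul{f}_{j,\ell}$ are standard basis vectors. This is the case used later, where $\ul{f}=\ul{e}_p$. Otherwise we read $\NVol$ as computed in the lattice or coordinates spanned by $\R^n$ and the $\ul{f}$'s.

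For the base of the induction, define
\[ C_{-1}:=(Q,\ul{0}),\qquad C_j:=\conv\big(C_{j-1},\ (K_{j,\ell},\ul{f}_{j,\ell}) : 1\leq \ell\leq r_j\big). \]
Then $C=C_J$. Each $C_{j-1}$ is a convex body of dimension $q+r_0+\cdots+r_{j-1}$ inside $\R^n\times\R^{r_0+\cdots+r_{j-1}}$.

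For the inductive step, we apply Lemma \ref{mixedvol} with the following choices:
\begin{itemize}
\item ambient space $\R^{n'}=\R^n\times\R^{r_0+\cdots+r_{j-1}}$;
\item base body $Q'=C_{j-1}$;
\item the bodies $K_{j,1},\ldots,K_{j,r_j}$, viewed as $(K_{j,\ell},\ul{0})$.
\end{itemize}
These bodies lie in $(Q,\ul{0})\subseteq C_{j-1}$ and in the subspace $L_j\times\{0\}$ of dimension at most $d_j$. After a rotation inside $\R^{n'}$ (which preserves volumes), $L_j$ becomes the coordinate subspace $\R^{d_j}\times\{0\}$, as the lemma requires. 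The lemma then gives
\[ \NVol(C_j)\le\binom{d_j+r_j}{r_j}\NVol(C_{j-1}). \]
If $\dim L_j<d_j$, we enlarge $L_j$ to a subspace of dimension $d_j$ containing it. If $d_j\geq n'$, the lemma does not apply as stated, but its proof works verbatim with $d=n'$, and $\binom{n'+r_j}{r_j}\le\binom{d_j+r_j}{r_j}$. Multiplying these inequalities over $j=0,\ldots,J$ gives the corollary.

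The only real care needed, which we expect to be the main obstacle, is checking the hypotheses of Lemma \ref{mixedvol} at each stage. Two points need checking. First, the $K_{j,\ell}$ must be contained in the current base body $C_{j-1}$ and not merely in $Q$; this holds because $Q\times\{0\}\subseteq C_{j-1}$. Second, the new directions must be fresh coordinates orthogonal to everything built so far, which is where we use the independence of the $\ul{f}_{j,\ell}$. The case $r_j=0$ is trivial, since then $C_j=C_{j-1}$ and the binomial factor equals $1$.
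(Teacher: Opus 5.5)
Your argument is correct and is exactly the iterated application of Lemma \ref{mixedvol} that the paper has in mind. You attach one group at a time and use the body $C_{j-1}\supseteq (Q,\ul{0})$ built so far as the new base, so the factors $\binom{d_j+r_j}{r_j}$ multiply up; the degenerate cases $d_j=0$ and $d_j\geq n'$ are covered by running the lemma's proof verbatim. Your caveat about normalization is also right: the statement implicitly needs the $\ul{f}_{j,\ell}$ to be standard basis vectors of the new coordinates, as in the later application with $\ul{e}_p$, because rescaling them changes $\NVol(C)$ but not the right-hand side.
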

 
 %%%%%%%%%%%%%%%%%%%%%%%%%%%%%%%%%%%%%%%%%%%%%%%%%%%%%%%%%%%%%%%%%%
 We now prove Proposition \ref{ublbPN}. 
 
 \begin{proof}[Proof of Proposition \ref{ublbPN}]
  Recall the definitions of $\eps$, $q$, $k$, $m_i^{-}$, $m_i^{+}$, $\mathcal{P}_i$, $L_N$ and $U_N$.
  
  \medskip
  \noindent
  $\bullet~$\underline{Lower bound.}
  Since $1\in \mathcal S(N,\sqrt{N})$, and $p^2\in \mathcal{S}(N,\sqrt{N})$ for every prime $p\leq \sqrt{N}$, we have
  \[ 2\Delta(\sqrt{N})\subseteq P(N,\sqrt{N})\subseteq P_N. \]
  Now let $p\in \mathcal{P}_i$. By definition, $p\leq \sqrt{N}/q^i$. Since $m_i^{-} = \lfloor q^i\sqrt{N} \rfloor$, it follows that
  \[ pm_i^{-}\leq \frac{\sqrt{N}}{q^i}\, q^i\sqrt{N}\leq N. \]
  This means $\ul{e}_p + \ul{e}_{\ell} \in P_N$ for every prime $\ell \leq m_i^{-}$, so $\ul{e}_p+\Delta(m_i^-)\subseteq P_N$. Hence
  \[ L_N\subseteq P_N \]
  which by Lemma \ref{Nvol-comp}, implies
  \begin{equation}
   \log\NVol(L_N)\leq \log V_N. \label{lbVN}
  \end{equation}

  \medskip
  \noindent
  $\bullet~$\underline{Upper bound -- main range.}
  For the upper bound, by Lemma \ref{PNdecomp},
  \[ P_N = \conv\bigg(P(N,\sqrt{N})\cup \bigcup_{\sqrt{N}<p\leq N} (\ul{e}_p+P_{N/p})\bigg) = \conv(\mathrm{main} \cup \mathrm{tail}), \]
  where
  \[ \mathrm{main} := P(N,\sqrt{N})\cup \bigcup_{i=1}^{k} \bigcup_{p\in \mathcal{P}_i} (\ul{e}_p+P_{N/p}), \qquad \mathrm{tail} := \bigcup_{p>\sqrt{N}/\eps} (\ul{e}_p+P_{N/p}). \]

  By Lemma \ref{outsimp}, we have $P(N,\sqrt{N})\subseteq \Sigma(N,\sqrt{N})$ and $T(\Sigma(N,\sqrt{N}))=2\Delta(\sqrt{N})$, where $T:\R^{\pi(N)}\to \R^{\pi(N)}$ is the diagonal map
  \[ T(u_p):=\frac{\log p}{\log \sqrt{N}}\,u_p \qquad (p\leq \sqrt{N}), \]
  acting as the identity on the coordinates $p>\sqrt{N}$. Now let $p\in \mathcal{P}_i$. Since $N/p\leq m_i^{+}$, every point $\ul{u}\in P_{N/p}$ has support on the prime coordinates $\ell\leq N/p\leq m_i^{+}$, and satisfies
  \[ \sum_{\ell\leq m_i^{+}} u_\ell \log \ell \leq \log(N/p)\leq \log m_i^{+}. \]
  Therefore $T(\ul{u})$ also has support on the coordinates $\ell\leq m_i^{+}$, and
  \[ \sum_{\ell\leq m_i^{+}} T(u_{\ell}) = \frac{1}{\log \sqrt{N}} \sum_{\ell\leq m_i^{+}} u_{\ell} \log \ell \leq \frac{\log m_i^{+}}{\log \sqrt{N}} \leq 1. \]
  Hence
  \[ T(P_{N/p})\subseteq \frac{\log m_i^{+}}{\log \sqrt{N}} \Delta(m_i^{+})\subseteq \Delta(m_i^{+}). \]
  It follows that the transformed main range is contained in
  \begin{equation}
   T(\mathrm{main}) \subseteq U_N = \conv\bigg(2\Delta(\sqrt{N})\cup \bigcup_{i=1}^k\bigcup_{p\in \mathcal{P}_i} (\ul{e}_p+\Delta(m_i^{+}))\bigg). \label{Tmain}
  \end{equation}
  Moreover,
  \begin{equation}
   \log \det(T)^{-1} = \sum_{p\leq \sqrt{N}} \log\bigg(\frac{\log \sqrt{N}}{\log p}\bigg) \ll \frac{\sqrt{N}}{(\log N)^2}. \label{detT}
  \end{equation}
  
  \medskip
  \noindent
  $\bullet~$\underline{Upper bound -- tail.}
  Now define
  \[ \mathcal{T}_N(\eps;U_N):=\conv\bigg(U_N\cup \bigcup_{p>\sqrt{N}/\eps} (\ul{e}_p+T(P_{N/p}))\bigg). \]
  To estimate the contribution of the tail, put $x_j:=2^{-j}\eps$ for $j\geq 0$, and define
  \[ \mathcal{Q}_j := \bigg\{p\in \P:\frac{\sqrt{N}}{x_j}<p\leq \min \bigg(N,\frac{\sqrt{N}}{x_{j+1}}\bigg)\bigg\}. \]
  Since $\mathcal{Q}_j = \varnothing$ once $\sqrt{N}/x_j>N$, only finitely many of these sets are nonempty.

  Let $p\in \mathcal{Q}_j$. Then $N/p < x_j\sqrt{N}$. Hence every point of $P_{N/p}$ is supported on the prime coordinates $\ell\leq x_j\sqrt{N}$ and satisfies
  \[ \sum_{\ell\leq x_j\sqrt{N}} u_\ell \log \ell \leq \log(N/p) \leq \log(x_j\sqrt{N}). \]
  Therefore $T(P_{N/p})$ is supported on the prime coordinates $\ell\leq x_j\sqrt{N}$ and, for every $\ul{u}\in P_{N/p}$,
  \[ \sum_{\ell\leq x_j\sqrt{N}} T(u_\ell) = \frac{1}{\log\sqrt{N}}\sum_{\ell\leq x_j\sqrt{N}} u_\ell \log \ell \leq \frac{\log(x_j\sqrt{N})}{\log\sqrt{N}} \leq 1. \]
  It follows that
  \[ T(P_{N/p})\subseteq \Delta(\lfloor x_j\sqrt{N}\rfloor). \]

  Set $d_j:=\pi(\lfloor x_j\sqrt{N}\rfloor)$. Then $\Delta(\lfloor x_j\sqrt{N}\rfloor)$ is contained in the coordinate subspace spanned by $\{\ul{e}_\ell:\ell\leq x_j\sqrt{N}\}$, which has dimension $d_j$, and also
  \[ \Delta(\lfloor x_j\sqrt{N}\rfloor)\subseteq \Delta(\sqrt{N})\subseteq U_N. \]
  Applying Corollary \ref{mixedvol-iter} with base body $Q=U_N$, and with the $j$th block consisting of the bodies $T(P_{N/p})$ for $p\in \mathcal{Q}_j$, we obtain
  \[ \log \NVol(\mathcal{T}_N(\eps;U_N)) \leq \log \NVol(U_N)+\sum_{j\geq 0}\log \binom{d_j+|\mathcal{Q}_j|}{|\mathcal{Q}_j|}. \]

  It remains to estimate the sum. We split the indices $j$ with $|\mathcal{Q}_j|\neq 0$ into two classes:
  \[ \mathcal J_1:=\{j : x_j\sqrt{N}\geq N^{1/4}\}, \qquad \mathcal J_2:=\{j : x_j\sqrt{N}<N^{1/4}\}. \]
  If $j\in \mathcal J_1$, then $\log(x_j\sqrt{N})\geq \frac{1}{4}\log N$, and the prime number theorem gives
  \[ d_j=\pi(\lfloor x_j\sqrt{N}\rfloor)\ll x_j \frac{\sqrt{N}}{\log N}, \]
  and also $|\mathcal{Q}_j|\ll \frac{1}{x_j} \sqrt{N}/\log N$.
  Hence
  \[ \log \binom{d_j+|\mathcal{Q}_j|}{|\mathcal{Q}_j|}\ll d_j\log\bigg(1+\frac{|\mathcal{Q}_j|}{d_j}\bigg)\ll x_j \frac{\sqrt{N}}{\log N} \log\bigg(1+\frac{1}{x_j^2}\bigg). \]
  Summing over $j\in \mathcal J_1$, we get
  \begin{align*}
   \sum_{j\in \mathcal J_1}\log \binom{d_j+|\mathcal{Q}_j|}{|\mathcal{Q}_j|} &\ll \frac{\sqrt{N}}{\log N}\sum_{j\geq 0} x_j\log\bigg(1+\frac{1}{x_j^2}\bigg) \\
   &\leq \frac{\sqrt{N}}{\log N}\sum_{j\geq 0} 2^{-j}\eps\bigg(\log 2 + \log\bigg(\frac{1}{\eps^2}\bigg)+2j\log 2 \bigg) \\
   &\ll \eps\log\bigg(\frac{1}{\eps}\bigg) \frac{\sqrt{N}}{\log N}.
  \end{align*}

  Now let $j\in \mathcal J_2$. Then $x_j\sqrt{N}<N^{1/4}$, so
  \[ d_j\leq \pi(N^{1/4})\ll N^{1/4}. \]
  Also trivially $|\mathcal{Q}_j|\leq \pi(N)\ll N/\log N$. Therefore
  \[ \log \binom{d_j+|\mathcal{Q}_j|}{|\mathcal{Q}_j|}\ll d_j\log(1+|\mathcal{Q}_j|)\ll N^{1/4}\log N. \]
  Since there are only $O(\log N)$ indices $j$ with $|\mathcal{Q}_j|\neq 0$, it follows that
  \[ \sum_{j\in \mathcal J_2}\log \binom{d_j+|\mathcal{Q}_j|}{|\mathcal{Q}_j|}\ll N^{1/4}(\log N)^2. \]

  Combining the two ranges, we conclude that
  \[ \log \NVol(\mathcal{T}_N(\eps;U_N)) \leq \log \NVol(U_N)+ O\bigg(\eps\log\bigg(\frac{1}{\eps}\bigg)\frac{\sqrt{N}}{\log N}\bigg) + O(N^{1/4}(\log N)^2). \]
  Since, by \eqref{detT}, $T$ changes normalized volume by at most $O(\sqrt{N}/(\log N)^2)$, it follows from \eqref{Tmain} that
  \begin{equation}
   \log V_N\leq \log \NVol(U_N) + O\bigg(\eps\log\bigg(\frac{1}{\eps}\bigg)\frac{\sqrt{N}}{\log N}\bigg) + O\bigg(\frac{\sqrt{N}}{(\log N)^2}\bigg). \label{ubVN}
  \end{equation}
  Combining this with the lower bound \eqref{lbVN} completes the proof.
 \end{proof}
 
%%%%%%%%%%%%%%%%%%%%%%%%%%%%%%%%%%%%%%%%%%%%%%%%%%%%%%%%%%%%%%%%%%
\subsection{Asymptotics for \texorpdfstring{$L_N$}{L\_N} and \texorpdfstring{$U_N$}{U\_N}}
 The convex bodies $L_N$ and $U_N$ split naturally into blocks: large primes are grouped by $\mathcal{P}_i$, while small primes are grouped according to the nested simplices $\Delta(m_i^\pm)$. Since the coordinates within each block play identical roles, we collapse them to the sum of their coordinates. This allows us to express the volume as an integral over a lower dimensional region.
 
%%%%%%%%%%%%%%%%%%%%%%%%%%%%%%%%%%%%%%%%%%%%%%%%%%%%%%%%%%%%%%%%%%
 \begin{lem}\label{model-fiber}
  Let $d_0\geq d_1\geq \cdots \geq d_k\geq 0$ and $r_1,\ldots,r_k\geq 1$. Put
  \[ \mathcal{U} := \conv\bigg(2\Delta(d_0)\cup \bigcup_{i=1}^k\bigcup_{j=1}^{r_i} (\ul{e}_{i,j} + \Delta(d_i))\bigg), \]
  where the $\ul{e}_{i,j}$ are new independent directions. Set
  \[ a_i:=d_i-d_{i+1}\quad (0\leq i<k), \qquad a_k:=d_k, \qquad M:=d_0+\sum_{i=1}^k r_i. \]
  Then
  \[ \NVol(\mathcal{U}) = \frac{M!}{\prod_{i:\,a_i>0}(a_i-1)!\prod_{i=1}^k(r_i-1)!} \int_{\mathcal{R}_k} \prod_{i:\,a_i>0}u_i^{a_i-1}\prod_{i=1}^k v_i^{r_i-1}\,\mathrm{d}\ul{u}\,\mathrm{d}\ul{v}, \]
  where, writing $\ul{u} = (u_0,\ldots,u_k)$, $\ul{v}=(v_1,\ldots,v_k)$,
  \[ \mathcal{R}_k:=\bigg\{(\ul{u},\ul{v})\in \R_{\geq 0}^{k+1}\times \R_{\geq 0}^k : \|\ul{v}\|_1\leq 1,\ u_0+\cdots+u_i \leq 2(1-\|\ul{v}\|_1)+v_1+\cdots+v_i\ (i\leq k)\bigg\}. \]
 \end{lem}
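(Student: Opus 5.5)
Our plan is to describe $\mathcal{U}$ by explicit linear inequalities and then collapse each block of coordinates to its sum. Here $\Delta(d)$ denotes the standard simplex on the first $d$ coordinates of the $d_0$ ``small'' coordinates, so $\Delta(d_k)\subseteq\cdots\subseteq\Delta(d_0)$. Group the small coordinates into blocks $B_0,\dots,B_k$, where $B_i$ consists of the $a_i$ coordinates indexed $d_{i+1}<\ell\leq d_i$ (with $d_{k+1}:=0$). Thus $\Delta(d_i)$ is supported on $B_i\cup\cdots\cup B_k$. For a point $(\ul{x},\ul{y})$, with small coordinates $\ul{x}$ and new coordinates $y_{i,j}$, set $U_i:=\sum_{\ell\in B_i}x_\ell$ and $v_i:=\sum_j y_{i,j}$.

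\textbf{Step 1 (facet description).} We claim that
\[ \mathcal{U}=\Big\{(\ul{x},\ul{y})\geq 0:\ \|\ul{v}\|_1\leq 1,\ \sum_{i'\le i}U_{i'}\leq 2(1-\|\ul{v}\|_1)+v_1+\cdots+v_i\ (0\le i\le k)\Big\}, \]
where in the $i$-th inequality the left side runs over the blocks $B_0,\dots,B_i$ outside $\Delta(d_{i+1})$. To prove ``$\subseteq$'', note that each constraint is linear and holds at every generating vertex, so it holds on the convex hull. At a vertex $2\ul{e}_\ell$ the left side is at most $2$, the right side equals $2$, and $\ul{v}=0$. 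At a vertex $\ul{e}_{i',j}+\ul{e}_\ell$ with $\ell\in\Delta(d_{i'})$, the right side of the $i$-th inequality is $1$ if $i'\le i$, and otherwise it is $0$; in the latter case $\ell\in B_{i'}\cup\cdots$, which does not enter the left side when $i<i'$. At $\ul{e}_{i',j}$ itself the left side is $0$.

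For ``$\supseteq$'' we give an explicit convex decomposition. Given a point in the right-hand set, first write $\ul{y}$ as a combination with weights $y_{i,j}$ on the vertex sets $\ul{e}_{i,j}+\Delta(d_i)$ and weight $1-\|\ul{v}\|_1$ on $2\Delta(d_0)$. The task is then to distribute the mass $U_0,\dots,U_k$: block $B_i$ may receive mass from the $2\Delta$ part (capacity $2(1-\|\ul{v}\|_1)$) and from $v_1,\dots,v_i$ (capacity $1$ per unit of weight). This is a nested bipartite transportation problem with capacities, so Hall's condition reduces to the prefix inequalities above, processed greedily from $B_0$ upward. We expect this to be the main obstacle: one must get the nesting direction right and check that the prefix (rather than suffix) inequalities are exactly Hall's conditions. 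We will also need the fact that within a block the mass can be split arbitrarily among coordinates, which holds because each $\Delta(d_i)$ is a full simplex on its support.

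\textbf{Step 2 (collapsing).} By Step 1, membership depends only on the block sums $U_0,\dots,U_k,v_1,\dots,v_k$ together with nonnegativity. We integrate over each block fibre: the set $\{\ul{x}_{B}\geq 0:\sum x=u\}$ has $(a-1)$-dimensional measure $u^{a-1}/(a-1)!$ (measured by projecting out one coordinate). A block with $a_i=0$ contributes nothing and forces $U_i=0$, which accounts for the restriction $a_i>0$ in the product. Fubini then gives
\[ \vol(\mathcal{U})=\int_{\mathcal{R}_k}\prod_{a_i>0}\frac{u_i^{a_i-1}}{(a_i-1)!}\prod_{i}\frac{v_i^{r_i-1}}{(r_i-1)!}\,\mathrm{d}\ul{u}\,\mathrm{d}\ul{v}. \]
Here the variables $u_i$ with $a_i=0$ are interpreted as absent, or equivalently set to $0$; we make the $\mathcal R_k$ convention consistent with this. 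The total dimension is $\sum a_i+\sum r_i=d_0+\sum r_i=M$, so multiplying by $M!$ yields the stated formula for $\NVol(\mathcal{U})$.
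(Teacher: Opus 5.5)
Your proposal is correct and follows essentially the same route as the paper. Both proofs block the small coordinates as $B_i=\{d_{i+1}+1,\ldots,d_i\}$ and show that membership in $\mathcal{U}$ is characterized by nonnegativity, $\|\ul{v}\|_1\leq 1$ and the prefix inequalities on block sums; sufficiency comes from the same greedy nested allocation from $B_0$ upward that the paper carries out with its numbers $b_{j,i}$. Both then integrate the product of simplex fibre volumes $u_i^{a_i-1}/(a_i-1)!$ and $v_i^{r_i-1}/(r_i-1)!$ over $\mathcal{R}_k$.

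The differences are cosmetic. You verify ``$\subseteq$'' by evaluating the affine constraints at the generating points rather than through the Minkowski-sum characterization. You also normalize the fibre measures by a unimodular change of variables instead of the paper's cancelling Jacobian factors $\sqrt{a_i}$ and $\sqrt{r_i}$.
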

 \begin{proof}
  For $0\leq i<k$, define $I_i:=\{d_{i+1}+1,\ldots,d_i\}$, and $I_k:=\{1,\ldots,d_k\}$.
  Then $|I_i|=a_i$, and the sets $I_0,\ldots,I_k$ form a partition of $\{1,\ldots,d_0\}$. For $\ul{x}\in \R_{\geq 0}^{d_0}$, define
  \[ U_i(\ul{x}):=\sum_{m\in I_i}x_m \qquad (0\leq i\leq k). \]
  
  \medskip
  \noindent
  $\bullet~$\underline{Claim.}
  Given $t_0,\ldots,t_k\geq 0$, we have $\ul{x}\in t_0\Delta(d_0)+\cdots+t_k\Delta(d_k)$ if and only if
  \begin{equation}
   U_0(\ul{x})+\cdots+U_i(\ul{x})\leq t_0+\cdots+t_i \qquad (0\leq i\leq k). \label{Uoft}
  \end{equation}

  To prove the necessity, write $\ul{x}=\ul{x}^{(0)}+\cdots+\ul{x}^{(k)}$ with $\ul{x}^{(i)}\in t_i\Delta(d_i)$. Since $\ul{x}^{(j)}$ is supported on the first $d_j$ coordinates, it has no coordinates in $I_0\cup \cdots \cup I_i$ when $j>i$. Hence
  \begin{align*}
   U_0(\ul{x})+\cdots+U_i(\ul{x}) = \sum_{m=1}^{d_i}x_m = \sum_{j=0}^i \sum_{m=1}^{d_i}x_m^{(j)} \leq \sum_{j=0}^i t_j,
  \end{align*}
  since each $\ul{x}^{(j)}\in t_j\Delta(d_j)$ has $\ell^1$-norm at most $t_j$.

  To prove the sufficiency, suppose that $\ul{x}\in \R_{\geq 0}^{d_0}$ satisfies \eqref{Uoft}. We will construct vectors $\ul{x}^{(0)}, \ldots, \ul{x}^{(k)}$ with $\ul{x}^{(i)}\in t_i\Delta(d_i)$ for $0\leq i\leq k$, and $\ul{x}=\ul{x}^{(0)}+\cdots+\ul{x}^{(k)}$. We choose numbers $b_{j,i}\geq 0$ for $0\leq j\leq i\leq k$ such that
  \[ \sum_{j=0}^i b_{j,i} = U_i(\ul{x}) \quad (0\leq i\leq k), \quad \text{and}\quad \sum_{i=j}^k b_{j,i}\leq t_j \quad (0\leq j\leq k). \]
  We do this inductively on $i$.

  For $i=0$, the inequality $U_0(\ul{x})\leq t_0$ allows us to set $b_{0,0}:=U_0(\ul{x})$. Suppose now that the $b_{j,m}$ have been chosen for all $m<i$. For $0\leq j\leq i$, define
  \[ c_j:=t_j-\sum_{m=j}^{i-1} b_{j,m}. \]
  Then
  \begin{align*}
   \sum_{j=0}^i c_j = \sum_{j=0}^i t_j-\sum_{m=0}^{i-1}\sum_{j=0}^m b_{j,m} = \sum_{j=0}^i t_j-\sum_{m=0}^{i-1}U_m(\ul{x}) \geq U_i(\ul{x}),
  \end{align*}
  by \eqref{Uoft} for $i$. Hence we can choose nonnegative numbers $b_{0,i},\ldots,b_{i,i}$ such that
  \[ b_{j,i}\leq c_j \quad (0\leq j\leq i), \qquad b_{0,i}+\cdots+b_{i,i}=U_i(\ul{x}), \]
  completing the induction.

  We now define $\ul{x}^{(j)}$ as follows. For each $i$ with $U_i(\ul{x})>0$, set $x_m^{(j)}:=b_{j,i}x_m/U_i(\ul{x})$ for $m\in I_i$ and $0\leq j\leq i$, and set $x_m^{(j)}:=0$ if $U_i(\ul{x})=0$ or $j>i$. Then $\ul{x}^{(j)}$ is supported on the first $d_j$ coordinates and
  \[ \sum_{m\in I_i}x_m^{(j)}=b_{j,i}\qquad (j\leq i\leq k), \qquad \|\ul{x}^{(j)}\|_1=\sum_{i=j}^k b_{j,i}\leq t_j. \]
  Hence $\ul{x}^{(j)}\in t_j\Delta(d_j)$. Moreover, for every $m\in I_i$, we have $\sum_{j=0}^k x_m^{(j)}=x_m$, since $\sum_{j=0}^i b_{j,i}=U_i(\ul{x})$. Thus $\ul{x}=\ul{x}^{(0)}+\cdots+\ul{x}^{(k)}$, proving the claim.

  \medskip
  \noindent
  $\bullet~$\underline{Describing $\mathcal{U}$.}
  Write a point of the ambient space as
  \[ (\ul{x},\ul{y}^{(1)},\ldots,\ul{y}^{(k)})\in \R^{d_0}\times \R^{r_1}\times \cdots \times \R^{r_k}, \]
  where $\ul{y}^{(i)}=(y_{i,1},\ldots,y_{i,r_i})$. Define
  \[ u_i:=U_i(\ul{x}) \quad (0\leq i\leq k), \qquad v_i:=y_{i,1}+\cdots+y_{i,r_i} \quad (1\leq i\leq k). \]

  We first prove that if $(\ul{x},\ul{y}^{(1)},\ldots,\ul{y}^{(k)})\in \mathcal{U}$, then $(\ul{u},\ul{v})\in \mathcal{R}_k$. If $(\ul{x},\ul{y}^{(1)},\ldots,\ul{y}^{(k)})\in \mathcal{U}$, then there exist numbers $\lambda_{i,j}\geq 0$ such that $\sum_{i=1}^k\sum_{j=1}^{r_i}\lambda_{i,j}\leq 1$,
  \[ \ul{y}^{(i)}=(\lambda_{i,1},\ldots,\lambda_{i,r_i}) \qquad (1\leq i\leq k), \]
  and
  \[ \ul{x}\in \bigg(1-\sum_{i=1}^k\sum_{j=1}^{r_i}\lambda_{i,j}\bigg)2\Delta(d_0)+\sum_{i=1}^k \bigg(\sum_{j=1}^{r_i}\lambda_{i,j}\bigg)\Delta(d_i). \]
  Since
  \[ v_i=\sum_{j=1}^{r_i}\lambda_{i,j}, \qquad \|\ul{v}\|_1=\sum_{i=1}^k\sum_{j=1}^{r_i}\lambda_{i,j}\leq 1, \]
  the previous claim, applied with
  \[ t_0:=2(1-\|\ul{v}\|_1), \qquad t_i:=v_i \quad (1\leq i\leq k), \]
  yields, by \eqref{Uoft},
  \[ u_0+\cdots+u_i\leq 2(1-\|\ul{v}\|_1)+v_1+\cdots+v_i \qquad (0\leq i\leq k). \]
  Thus $(\ul{u},\ul{v})\in \mathcal{R}_k$.

  %%%%%%%%%%%%%%%%%%%%%%%%%%%%%%%%%%%%%%%%%%%%%%%%%%%%%%%
  Conversely, let $(\ul{u},\ul{v})\in \mathcal{R}_k$, let $\ul{x}\in \R_{\geq 0}^{d_0}$ satisfy $U_i(\ul{x})=u_i$ for $0\leq i\leq k$, and, for each $1\leq i\leq k$, let $\ul{y}^{(i)}\in \R_{\geq 0}^{r_i}$ satisfy $\|\ul{y}^{(i)}\|_1=v_i$. By the previous claim, the inequalities defining $\mathcal{R}_k$ imply that
  \[ \ul{x}\in (1-\|\ul{v}\|_1)\,2\Delta(d_0)+v_1\Delta(d_1)+\cdots+v_k\Delta(d_k). \]
  Hence there exist vectors $\ul{z}^{(0)}\in 2\Delta(d_0)$, $\ul{z}^{(i)}\in \Delta(d_i)$ for $1\leq i\leq k$, such that
  \[ \ul{x}=(1-\|\ul{v}\|_1)\ul{z}^{(0)}+v_1\ul{z}^{(1)}+\cdots+v_k\ul{z}^{(k)}. \]
  Since $\sum_{j=1}^{r_i} y_{i,j} = v_i$ for $1\leq i\leq k$, we may rewrite this as
  \[ \ul{x}=(1-\|\ul{v}\|_1)\ul{z}^{(0)}+\sum_{i=1}^k\sum_{j=1}^{r_i} y_{i,j}\,\ul{z}^{(i)}. \]
  Therefore 
  \[ (\ul{x},\ul{y}^{(1)},\ldots,\ul{y}^{(k)}) = (1-\|\ul{v}\|_1)(\ul{z}^{(0)},\ul{0},\ldots,\ul{0}) + \sum_{i=1}^k\sum_{j=1}^{r_i} y_{i,j}\,(\ul{z}^{(i)},\ul{0},\ldots,\ul{0},\ul{e}_{j},\ul{0},\ldots,\ul{0}), \]
  where $\ul{e}_{j}$ has a $1$ in the $j$-th coordinate and sits on the $i$-th block $\R^{r_i}$. The coefficients in this convex combination are nonnegative and sum to
  \[ (1-\|\ul{v}\|_1)+\sum_{i=1}^k\sum_{j=1}^{r_i} y_{i,j} = (1-\|\ul{v}\|_1)+\sum_{i=1}^k v_i = 1. \]
  Moreover, $(\ul{z}^{(0)},\ul{0},\ldots,\ul{0})\in 2\Delta(d_0)$, and
  $(\ul{z}^{(i)},\ul{0},\ldots,\ul{0},\ul{e}_{j},\ul{0},\ldots,\ul{0})\in \ul{e}_{i,j}+\Delta(d_i)$ for $1\leq i\leq k$, $1\leq j\leq r_i$. Hence $(\ul{x},\ul{y}^{(1)},\ldots,\ul{y}^{(k)})\in \mathcal U$. %
  %%%%%%%%%%%%%%%%%%%%%%%%%%%%%%%%%%%%%%%%%%%%%%%%%%%%%%%
  We have thus shown that the map
  \[ \Pi:\mathcal{U}\to \R_{\geq 0}^{k+1}\times \R_{\geq 0}^k, \qquad \Pi(\ul{x},\ul{y}^{(1)},\ldots,\ul{y}^{(k)})=(\ul{u},\ul{v}), \]
  has image exactly $\mathcal{R}_k$.

  \medskip
  \noindent
  $\bullet~$\underline{Conclusion.}
  Fix $(\ul{u},\ul{v})\in \mathcal{R}_k$. The fiber $\Pi^{-1}(\ul{u},\ul{v})$ is the direct product of the sets
  \[ \{\ul{z}\in \R_{\geq 0}^{a_i}:\|\ul{z}\|_1=u_i\} \quad (0\leq i\leq k,\ a_i>0),\qquad \{\ul{w}\in \R_{\geq 0}^{r_i}:\|\ul{w}\|_1=v_i\} \quad (1\leq i\leq k). \]
  Each of these is a simplex, with Euclidean volume
  \[ \frac{\sqrt{a_i}\,u_i^{a_i-1}}{(a_i-1)!}\quad (0\leq i\leq k,\ a_i>0), \qquad \frac{\sqrt{r_i}\,v_i^{r_i-1}}{(r_i-1)!} \quad (1\leq i\leq k). \]
  On the other hand, since $\Pi$ takes the sum of the coordinates in each of these mutually orthogonal blocks, its Jacobian is $\sqrt{\det(D\Pi D\Pi^{T})} = \prod_{i:a_i>0}\sqrt{a_i}\prod_{i=1}^k\sqrt{r_i}$. Calculating the volume of $\mathcal{U}$ by slicing into level sets, these factors cancel, and the contribution of the fiber above $(\ul{u},\ul{v})$ is
  \[ \prod_{i:a_i>0}\frac{u_i^{a_i-1}}{(a_i-1)!}\prod_{i=1}^k \frac{v_i^{r_i-1}}{(r_i-1)!}. \]
  Hence,\footnote{If $a_i=0$, then $I_i=\varnothing$ and hence $u_i=0$ identically; so such variables are omitted from $\mathrm{d}\ul{u}$.} 
  \[ \vol(\mathcal{U}) = \int_{\mathcal{R}_k} \prod_{i:a_i>0}\frac{u_i^{a_i-1}}{(a_i-1)!}\prod_{i=1}^k \frac{v_i^{r_i-1}}{(r_i-1)!}\,\mathrm{d}\ul{u}\,\mathrm{d}\ul{v}. \]
  Multiplying by $M!$ yields the formula for $\NVol(\mathcal{U})$.
 \end{proof}

 We now estimate the volume of the model body $\mathcal{U}$ from Lemma \ref{model-fiber}, in the regime where the dimensions of the blocks grow proportionally to a common scale $A_N\to\infty$.
 
%%%%%%%%%%%%%%%%%%%%%%%%%%%%%%%%%%%%%%%%%%%%%%%%%%%%%%%%%%%%%%%%%%
 \begin{lem}\label{model-asymp}
  Let $A_N\to \infty$. Fix $k\geq 1$, and assume
  \[ a_i(N)=\alpha_i A_N+o(A_N)\quad (0\leq i\leq k), \qquad r_i(N)=\beta_i A_N+o(A_N)\quad (1\leq i\leq k), \]
  with $\alpha_i\geq 0$ and $\beta_i>0$ fixed. Let $\mathcal{U}_N$ be the body from Lemma \ref{model-fiber}, built from the data $a_i(N),r_i(N)$. Then
  \[ \frac{1}{A_N}\log \NVol(\mathcal{U}_N)\to \Phi_k(\boldsymbol{\alpha},\boldsymbol{\beta}), \]
  where
  \[ \Phi_k(\boldsymbol{\alpha},\boldsymbol{\beta}) := S\log S-\sum_{i=0}^k \alpha_i\log \alpha_i-\sum_{i=1}^k \beta_i\log \beta_i+\sup_{(\ul{u},\ul{v})\in \mathcal{R}_k}\bigg(\sum_{i=0}^k \alpha_i\log u_i+\sum_{i=1}^k \beta_i\log v_i\bigg), \]
  and $S:=\sum_{i=0}^k \alpha_i+\sum_{i=1}^k \beta_i$.
 \end{lem}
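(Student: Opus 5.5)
Our plan is to start from the exact formula of Lemma \ref{model-fiber} and carry out a Laplace-type (large deviations) analysis: the prefactor is handled by Stirling's formula, and the integral over the fixed compact region $\mathcal{R}_k$ by the standard principle $\big(\int_{\mathcal R} e^{A\,F}\big)^{1/A}\to \sup_{\mathcal R} e^{F}$. Write $M=d_0+\sum r_i=\sum_{i=0}^k a_i+\sum_{i=1}^k r_i = SA_N+o(A_N)$. By Stirling, $\log M! = SA_N\log A_N + A_N(S\log S - S)+o(A_N)$. Similarly, for indices with $\alpha_i>0$, $\log(a_i-1)! = \alpha_iA_N\log A_N+A_N(\alpha_i\log\alpha_i-\alpha_i)+o(A_N)$, and the same holds for $(r_i-1)!$. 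If $\alpha_i=0$ then $\log (a_i-1)! \le a_i\log a_i = o(A_N\log A_N)$; we must in fact check that it is $o(A_N)$ after cancellation, which is the source of one technicality (see below). Assuming this, the $A_N\log A_N$ and linear terms cancel since $S=\sum\alpha_i+\sum\beta_i$, and the prefactor contributes $A_N(S\log S-\sum\alpha_i\log\alpha_i-\sum\beta_i\log\beta_i)+o(A_N)$, with the convention $0\log 0=0$.

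For the integral $I_N:=\int_{\mathcal{R}_k}\prod u_i^{a_i-1}\prod v_i^{r_i-1}$, set $F(\ul u,\ul v):=\sum\alpha_i\log u_i+\sum\beta_i\log v_i$, which is upper semicontinuous on the compact set $\mathcal{R}_k$ (with value $-\infty$ on the boundary where a relevant coordinate vanishes) and is finite somewhere, since $\mathcal R_k$ has nonempty interior (take all $v_i$ small and positive and $u_i$ small). For the upper bound, note that all coordinates are bounded by a constant $B=B(k)$ on $\mathcal{R}_k$, so $u_i^{a_i-1}\le u_i^{a_i}\cdot u_i^{-1}$ is problematic near $0$; instead we bound $u_i^{a_i-1}\le \sup$ directly: $\log I_N\le \log\vol(\mathcal R_k)+\sup_{\mathcal R_k}\sum (a_i-1)\log u_i+\sum(r_i-1)\log v_i$, and since $(a_i-1)/A_N\to\alpha_i$, $(r_i-1)/A_N\to\beta_i$ and the logs are bounded above by $\log B$, dividing by $A_N$ gives $\limsup\frac1{A_N}\log I_N\le\sup F$ (the terms with $\alpha_i=0$ contribute at most $o(A_N)\log B$, and lowering the exponents by one costs only $O(\log B)$ where bounded). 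For the lower bound, fix an interior point $z^*$ with $F(z^*)>\sup F-\delta$; by continuity of $F$ near interior points there is a small ball $B_\rho(z^*)\subseteq\mathcal{R}_k$ on which all coordinates lie in $[c,B]$ with $c>0$ and $F\ge \sup F-2\delta$. Restricting the integral to this ball gives $\log I_N\ge \log\vol(B_\rho)+A_N(\sup F-2\delta)-o(A_N)\cdot|\log c|$, whence $\liminf\ge\sup F-2\delta$. Letting $\delta\to0$ gives $\frac1{A_N}\log I_N\to\sup_{\mathcal R_k}F$. Existence of such interior near-maximizers uses that $\mathcal{R}_k$ is a convex body (the closure of its interior) and that $F$ is concave, so a maximizer can be approached along segments from an interior point with $F$ increasing to the limit.

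The main obstacle we expect is the degenerate bookkeeping for indices with $\alpha_i=0$ (where $a_i$ may be $0$, or positive but $o(A_N)$). Here the Stirling terms $\log(a_i-1)!$ are only $o(A_N\log A_N)$, not $o(A_N)$, so they must be paired with the corresponding portion of $\log M!$: we will write $\log M!-\sum\log(a_i-1)!-\sum\log(r_i-1)!$ as a log-multinomial coefficient plus $O(k\log M)$, and use the entropy bound $\log\binom{M}{n_0,\dots}=M H(n_j/M)+O(k\log M)$. Since $H$ is continuous on the simplex, including at coordinates equal to $0$, the small blocks contribute $o(A_N)$, which yields precisely the $S\log S-\sum\alpha_i\log\alpha_i-\sum\beta_i\log\beta_i$ term. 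Combining the two limits then proves the lemma.
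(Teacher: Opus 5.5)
Your proposal is correct and follows the same route as the paper: the exact formula of Lemma~\ref{model-fiber}, Stirling's formula for the prefactor, an upper bound for the integral by the supremum of the integrand over the compact region $\mathcal{R}_k$, and a lower bound from a small neighbourhood of an interior near-maximizer of $F$. Your multinomial--entropy treatment of the prefactor is more careful than the paper's bare appeal to Stirling, but you should use it for all blocks rather than only the degenerate ones: separate expansions such as $\log M! = SA_N\log A_N + A_N(S\log S - S) + o(A_N)$ fail when $M - SA_N$ is merely $o(A_N)$, because the resulting errors of size $o(A_N)\log A_N$ cancel only in the combined multinomial coefficient.
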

 \begin{proof}
  By Lemma \ref{model-fiber},
  \begin{align*}
   &\NVol(\mathcal{U}_N) \\
   &\hspace{+3em} = \frac{M_N!}{\prod_{i:\,a_i(N)>0}(a_i(N)-1)!\prod_{i=1}^k(r_i(N)-1)!}\int_{\mathcal{R}_k}\prod_{i:\,a_i(N)>0}u_i^{a_i(N)-1}\prod_{i=1}^k v_i^{r_i(N)-1}\,\mathrm{d}\ul{u}\,\mathrm{d}\ul{v},
  \end{align*}
  where $M_N:=d_0(N)+\sum_{i=1}^k r_i(N)=\sum_{i=0}^k a_i(N)+\sum_{i=1}^k r_i(N)$. By Stirling's formula,
  \begin{align*}
   &\frac{1}{A_N}\log\bigg(\frac{M_N!}{\prod_{i:\,a_i(N)>0}(a_i(N)-1)!\prod_{i=1}^k(r_i(N)-1)!}\bigg)\\
   &\hspace{+12em} \to S\log S-\sum_{i=0}^k \alpha_i\log \alpha_i-\sum_{i=1}^k \beta_i\log \beta_i \qquad \text{as } N\to\infty.
  \end{align*}

  For the integral, define
  \[ F_N(\ul{u},\ul{v}) := \sum_{i:\,a_i(N)>0}\frac{a_i(N)-1}{A_N}\log u_i+\sum_{i=1}^k \frac{r_i(N)-1}{A_N}\log v_i, \]
  so that
  \[ \int_{\mathcal{R}_k}\prod_{i:\,a_i(N)>0}u_i^{a_i(N)-1}\prod_{i=1}^k v_i^{r_i(N)-1}\,\mathrm{d}\ul{u}\,\mathrm{d}\ul{v} = \int_{\mathcal{R}_k} e^{A_NF_N(\ul{u},\ul{v})}\,\mathrm{d}\ul{u}\,\mathrm{d}\ul{v}. \]
  Since $k$ is fixed, the region $\mathcal{R}_k$ is a fixed compact polytope. On the relative interior of $\mathcal{R}_k$, the functions $F_N$ converge pointwise to
  \[ F(\ul{u},\ul{v})=\sum_{i=0}^k \alpha_i\log u_i+\sum_{i=1}^k \beta_i\log v_i, \]
  where terms with zero coefficient are interpreted as zero. Moreover, if any variable with positive coefficient tends to $0$, then $F(\ul{u},\ul{v})\to -\infty$. The upper bound
  \[ \limsup_{N\to\infty}\frac{1}{A_N}\log \int_{\mathcal{R}_k} e^{A_NF_N(\ul{u},\ul{v})}\,\mathrm{d}\ul{u}\,\mathrm{d}\ul{v}\leq \sup_{\mathcal{R}_k} F \]
  follows from compactness. For the matching lower bound, choose a point $(\ul{u}_0,\ul{v}_0)$ in the relative interior of $\mathcal{R}_k$ such that
  \[ F(\ul{u}_0,\ul{v}_0)\geq \sup_{\mathcal{R}_k}F-\eta. \]
  By continuity, there is a neighbourhood $W\subseteq \mathcal{R}_k$ of $(\ul{u}_0,\ul{v}_0)$ on which all coordinates are bounded away from $0$. Since $F_N\to F$ uniformly on $W$, there is an integer $N_0$ such that for all $N\geq N_0$ and all $(\ul{u},\ul{v})\in W$,
  \[ F_N(\ul{u},\ul{v})\geq \sup_{\mathcal{R}_k}F-2\eta. \]
  Hence, for all $N\geq N_0$,
  \[ \int_{\mathcal{R}_k} e^{A_NF_N(\ul{u},\ul{v})}\,\mathrm{d}\ul{u}\,\mathrm{d}\ul{v} \geq \mathrm{vol}(W)\, e^{A_N(\sup_{\mathcal{R}_k}F-2\eta)}. \]
  Since $\eta>0$ is arbitrary, this gives
  \[ \frac{1}{A_N}\log \int_{\mathcal{R}_k} e^{A_NF_N(\ul{u},\ul{v})}\,\mathrm{d}\ul{u}\,\mathrm{d}\ul{v} \to \sup_{\mathcal{R}_k} F. \]

  Combining the asymptotics of the factorial term and the integral proves the lemma.
 \end{proof}

 The optimization problem appearing in Lemma \ref{model-asymp} can be simplified by passing to its Lagrange dual, reducing the supremum over $\mathcal{R}_k$ to an infimum over ordered variables.
 
%%%%%%%%%%%%%%%%%%%%%%%%%%%%%%%%%%%%%%%%%%%%%%%%%%%%%%%%%%%%%%%%%%
 \begin{lem}\label{dual-formula}
  Let $\Phi_k$ be as in Lemma \ref{model-asymp}. For all coefficients $\boldsymbol{\alpha} = (\alpha_0,\ldots,\alpha_k)\in \R_{\geq 0}^{k+1}$ and $\boldsymbol{\beta} = (\beta_1,\ldots,\beta_k)\in \R_{>0}^k$,
  \[ \Phi_k(\boldsymbol{\alpha}, \boldsymbol{\beta})=\inf_{0\leq s_1\leq \cdots \leq s_k\leq 1}\bigg(S\log 2-\sum_{i=1}^k \alpha_i\log(1-s_i)-\sum_{i=1}^k \beta_i\log(1+s_i)\bigg). \]
 \end{lem}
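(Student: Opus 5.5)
The plan is to compute the supremum in $\Phi_k$ by convex duality. Write
\[ F(\ul{u},\ul{v})=\sum_{i=0}^k\alpha_i\log u_i+\sum_{i=1}^k\beta_i\log v_i, \]
which is concave and upper semicontinuous on $\R_{\geq0}^{2k+1}$. Rewrite the constraints defining $\mathcal{R}_k$ as the $k+1$ linear inequalities
\[ g_i(\ul{u},\ul{v}):=\sum_{j\leq i}u_j+2\|\ul{v}\|_1-\sum_{1\leq j\leq i}v_j\leq 2 \qquad (0\leq i\leq k). \]
The constraint $\|\ul{v}\|_1\leq 1$ is implied by $g_0\leq 2$ together with $u_0\geq 0$, and nonnegativity is built into the domain of $F$. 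There is a Slater point with all coordinates small and positive, which gives strict feasibility, and $F$ is bounded above on the compact set $\mathcal{R}_k$. Hence strong duality holds:
\[ \sup_{\mathcal{R}_k}F=\inf_{\lambda\in\R_{\geq0}^{k+1}}\sup_{\ul{u},\ul{v}\geq 0}\Big(F+\sum_i\lambda_i\big(2-g_i\big)\Big). \]

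Next compute the inner supremum explicitly. Set $\mu_j:=\sum_{i\geq j}\lambda_i$ for $0\leq j\leq k$, and $\Lambda:=\mu_0$. Then $\Lambda=\mu_0\geq\mu_1\geq\cdots\geq\mu_k\geq 0$, and every such nonincreasing nonnegative sequence arises from some $\lambda\geq0$.

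In the Lagrangian, the coefficient of $u_j$ is $-\mu_j$. The coefficient of $v_j$ is $-(2\Lambda-\mu_j)$, because $v_j$ enters $g_i$ with weight $2$ for $i<j$ and weight $1$ for $i\geq j$. The constant term is $2\Lambda$.

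Using $\sup_{t>0}(a\log t-ct)=a\log(a/c)-a$ for $a,c>0$ (which is $+\infty$ if $c=0<a$, and $0$ if $a=0$), the dual function becomes
\[ 2\Lambda-S+\sum_i\alpha_i\log\alpha_i+\sum_i\beta_i\log\beta_i-\sum_{i=0}^k\alpha_i\log\mu_i-\sum_{i=1}^k\beta_i\log(2\Lambda-\mu_i). \]
We use the convention $0\log 0=0$, and terms with $\alpha_i=0$ simply drop out.

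Now reparametrize by $\mu_i=\Lambda(1-s_i)$ with $s_0=0$. The monotonicity of $\mu$ translates exactly into $0\leq s_1\leq\cdots\leq s_k\leq1$, and $2\Lambda-\mu_i=\Lambda(1+s_i)$. The dual function then equals
\[ 2\Lambda-S\log\Lambda-S+\sum\alpha_i\log\alpha_i+\sum\beta_i\log\beta_i-\sum_{i\geq1}\alpha_i\log(1-s_i)-\sum_{i\geq1}\beta_i\log(1+s_i). \]
Minimizing over $\Lambda>0$ gives $\Lambda=S/2$, where $2\Lambda-S\log\Lambda-S=S\log 2-S\log S$. Adding $S\log S-\sum\alpha_i\log\alpha_i-\sum\beta_i\log\beta_i$, as in the definition of $\Phi_k$, produces exactly the claimed infimum over $s$.

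The main obstacle is rigor at the boundary. Several points need care:
\begin{itemize}
\item Justify strong duality when $F$ takes the value $-\infty$ on the boundary. I would handle this by restricting to the open orthant where $F$ is finite and concave, and applying the standard Slater theorem there.
\item Handle indices with $\alpha_i=0$, where $u_i$ then carries no logarithm.
\item Handle degenerate dual points: $\mu_i=0$ with $\alpha_i>0$, which corresponds to $s_i=1$, gives the value $+\infty$. This is consistent with the convention $-\alpha_i\log 0=+\infty$ in the stated formula, so the infimum may be taken over the closed range of $s$.
\item Check that $\Lambda=0$ is never optimal, since $S>0$ because $\beta_i>0$.
\end{itemize}
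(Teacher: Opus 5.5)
Your proposal is correct and follows essentially the same route as the paper: you dualize the affine constraints with strong duality via Slater, use cumulative multipliers (your $\mu_j$ are the paper's $L_j$), compute the inner supremum from $\sup_{t>0}(a\log t-ct)$, scale out the overall size of the multipliers, and reparametrize by $s_i$. The one difference is cosmetic: you discard $\|\ul{v}\|_1\leq 1$ as redundant (it follows from the $i=0$ inequality and $u_0\geq 0$), which removes the paper's extra multiplier $\mu$ and its final step of pushing $a_0$ up to $\tfrac12$, since in your setup $\mu_0=\Lambda$ automatically.
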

 \begin{proof}
  Set
  \[ F(\ul{u},\ul{v}) := \sum_{i=0}^k \alpha_i\log u_i+\sum_{i=1}^k \beta_i\log v_i. \]
  We dualize the maximization of $F$ over $\mathcal{R}_k$.

  For $0\leq i\leq k$, write
  \[ U_i := u_0+\cdots+u_i, \qquad V_i := v_1+\cdots+v_i, \qquad V := v_1+\cdots+v_k. \]
  The defining inequalities of $\mathcal{R}_k$ are
  \[ U_i-2(1-V)-V_i\leq 0 \quad (0\leq i\leq k), \qquad V-1\leq 0. \]
  Introduce nonnegative Lagrange multipliers $\lambda_0,\ldots,\lambda_k,\mu$. The Lagrangian is
  \[ \mathcal{L}(\ul{u},\ul{v};\lambda,\mu) := \sum_{i=0}^k \alpha_i\log u_i+\sum_{i=1}^k \beta_i\log v_i -\sum_{i=0}^k \lambda_i(U_i-2(1-V)-V_i)-\mu(V-1). \]
  Define
  \[ L_i:=\lambda_i+\cdots+\lambda_k \qquad (0\leq i\leq k). \]
  Then $L_0\geq L_1\geq \cdots \geq L_k\geq 0$. Also, the coefficient of $v_i$ in the Lagrangian is $2L_0-L_i+\mu$.
  Hence
  \[ \mathcal{L}(\ul{u},\ul{v};\lambda,\mu) = \sum_{i=0}^k (\alpha_i\log u_i- L_i u_i) +\sum_{i=1}^k (\beta_i\log v_i-(2L_0-L_i+\mu)v_i) +2L_0+\mu. \]
  Maximizing with respect to the variables $u_i$ and $v_i$ gives\footnote{Using that $\sup_{t>0} (a\log t - bt) = a\log a - a - a\log b$.}
  \begin{align*}
   \sup_{\ul{u},\ul{v}}\mathcal{L}(\ul{u},\ul{v};\lambda,\mu) &= \sum_{i=0}^k (\alpha_i\log \alpha_i-\alpha_i-\alpha_i\log L_i) \\
   &\hspace{+5em} + \sum_{i=1}^k (\beta_i\log \beta_i-\beta_i-\beta_i\log(2L_0-L_i+\mu)) + 2L_0 + \mu.
  \end{align*}
  Therefore\footnote{Here we use strong Lagrange duality $\sup_{(\ul{u},\ul{v})\in \mathcal{R}_k} F(\ul{u},\ul{v}) = \inf_{\lambda_i,\mu\geq 0} \sup_{\ul{u},\ul{v}} \mathcal{L}(\ul{u},\ul{v};\lambda,\mu)$, which applies since $F$ is concave, the constraints defining $\mathcal{R}_k$ are affine, and Slater's condition holds (see \cite[\S5.2.3]{boydvand04}).}
  \begin{align*}
   \Phi_k(\boldsymbol{\alpha}, \boldsymbol{\beta}) &= S\log S-S + \inf_{\lambda_i,\mu\geq 0} \bigg(2L_0+\mu-\sum_{i=0}^k \alpha_i\log L_i -\sum_{i=1}^k \beta_i\log(2L_0-L_i+\mu) \bigg).
  \end{align*}

  If we scale all multipliers by a factor $t>0$, then every $L_i$ and every $2L_0-L_i+\mu$ is also scaled by $t$. Hence the quantity inside the infimum becomes
  \[ t(2L_0+\mu)-S\log t+\text{(term independent of $t$)}. \]
  For fixed multipliers, this is minimized when
  \[ t=\frac{S}{2L_0+\mu}. \]
  Hence, at the minimum, we may normalize so that $2L_0+\mu=S$. Set
  \[ a_i:=\frac{L_i}{S} \qquad (0\leq i\leq k). \]
  Then $0\leq a_k\leq \cdots \leq a_1\leq a_0\leq \frac{1}{2}$, and
  \[ \frac{2L_0-L_i+\mu}{S}=1-a_i. \]
  Therefore
  \[ \Phi_k(\boldsymbol{\alpha}, \boldsymbol{\beta})=\inf_{0\leq a_k\leq \cdots \leq a_1\leq a_0\leq \frac{1}{2}} \bigg(-\sum_{i=0}^k \alpha_i\log a_i-\sum_{i=1}^k \beta_i\log(1-a_i)\bigg). \]
  For fixed $a_1,\ldots,a_k$, the function is decreasing in $a_0$, so the minimum occurs at $a_0=\frac{1}{2}$. Now put $s_i:=1-2a_i$ for $1\leq i\leq k$. Then $0\leq s_1\leq \cdots \leq s_k\leq 1$, and $a_i=\frac{1-s_i}{2}$, $1-a_i=\frac{1+s_i}{2}$. Substituting this into the previous formula gives
  \[ \Phi_k(\boldsymbol{\alpha},\boldsymbol{\beta}) = \inf_{0\leq s_1\leq \cdots \leq s_k\leq 1} \bigg(S\log 2-\sum_{i=1}^k \alpha_i\log(1-s_i)-\sum_{i=1}^k \beta_i\log(1+s_i)\bigg). \qedhere \]
 \end{proof}

 Under a natural monotonicity condition on the coefficients, the optimization problem in Lemma \ref{dual-formula} can be solved explicitly.
 
%%%%%%%%%%%%%%%%%%%%%%%%%%%%%%%%%%%%%%%%%%%%%%%%%%%%%%%%%%%%%%%%%%
 \begin{lem}\label{geom-eval}
  Let $\Phi_k$ be as in Lemma \ref{model-asymp}. Suppose $\alpha_i>0$ and $\beta_i>0$ for $1\leq i\leq k$, the ratios $\beta_i/\alpha_i$ are strictly increasing in $i$, and $\beta_i\geq \alpha_i$ for all $1\leq i\leq k$. Then
  \[ \Phi_k(\boldsymbol{\alpha},\boldsymbol{\beta}) = \alpha_0\log 2 + \sum_{i=1}^k \Big((\alpha_i+\beta_i)\log(\alpha_i+\beta_i)-\alpha_i\log \alpha_i-\beta_i\log \beta_i\Big). \]
 \end{lem}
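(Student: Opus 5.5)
Our plan is to start from the dual formula of Lemma \ref{dual-formula} and solve the ordered minimization explicitly. The objective
\[ G(\ul{s}) := S\log 2-\sum_{i=1}^k \alpha_i\log(1-s_i)-\sum_{i=1}^k \beta_i\log(1+s_i) \]
is separable and convex in each $s_i$, since $-\log(1-s)$ and $-\log(1+s)$ are convex. The constraint set $0\leq s_1\leq\cdots\leq s_k\leq 1$ is convex, so it suffices to exhibit a feasible stationary point of the unconstrained separable problem.

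First, we drop the ordering and minimize each term $g_i(s):=-\alpha_i\log(1-s)-\beta_i\log(1+s)$ on $[0,1)$ separately. Setting $g_i'(s)=\alpha_i/(1-s)-\beta_i/(1+s)=0$ gives
\[ s_i^*=\frac{\beta_i-\alpha_i}{\alpha_i+\beta_i}. \]
The hypothesis $\beta_i\geq\alpha_i$ gives $s_i^*\in[0,1)$. By strict convexity of $g_i$ (using $\alpha_i>0$), $s_i^*$ is the unique minimizer of $g_i$ on $[0,1]$, with $g_i\to+\infty$ as $s\to1$.

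Next, we check that the individual minimizers respect the ordering. The map $t\mapsto (t-1)/(t+1)$ is increasing, and $s_i^*=(t_i-1)/(t_i+1)$ with $t_i=\beta_i/\alpha_i$. Since the ratios $t_i$ are strictly increasing, $s_1^*<\cdots<s_k^*$. Hence $\ul{s}^*$ is feasible for the ordered problem. Since it minimizes the relaxed, unordered problem, it also minimizes the ordered one, and the infimum equals $G(\ul{s}^*)$.

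Finally, we evaluate. Here $1-s_i^*=2\alpha_i/(\alpha_i+\beta_i)$ and $1+s_i^*=2\beta_i/(\alpha_i+\beta_i)$. Hence
\[ g_i(s_i^*)=(\alpha_i+\beta_i)\log(\alpha_i+\beta_i)-\alpha_i\log\alpha_i-\beta_i\log\beta_i-(\alpha_i+\beta_i)\log 2. \]
Summing over $i$ and using $S=\alpha_0+\sum_{i\geq1}(\alpha_i+\beta_i)$, the $\log 2$ terms collapse to $\alpha_0\log 2$, which gives the stated formula. There is no real obstacle in this argument. The only point needing care is the feasibility or ordering step, which is exactly where the monotonicity hypothesis on $\beta_i/\alpha_i$ and the hypothesis $\beta_i\geq\alpha_i$ (ensuring $s_1^*\geq 0$) are used, together with the justification that minimizing the relaxation suffices.
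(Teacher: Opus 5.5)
Your proof is correct and follows the paper's argument essentially verbatim. Like the paper, you start from the dual formula of Lemma \ref{dual-formula}, take the separate critical points $s_i=(\beta_i-\alpha_i)/(\alpha_i+\beta_i)$, use $\beta_i\geq\alpha_i$ and the increasing ratios $\beta_i/\alpha_i$ to check feasibility, and substitute; your appeal to convexity of each one-variable term makes explicit the step the paper leaves implicit, namely why a feasible minimizer of the relaxed problem also solves the ordered one.
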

 \begin{proof}
  By Lemma \ref{dual-formula}, we must minimize
  \[ S\log 2-\sum_{i=1}^k \alpha_i\log(1-s_i)-\sum_{i=1}^k \beta_i\log(1+s_i) \]
  subject to $0\leq s_1\leq \cdots \leq s_k\leq 1$.
  Ignoring the monotonicity constraints, the derivative with respect to $s_i$ is
  \[ \frac{\alpha_i}{1-s_i}-\frac{\beta_i}{1+s_i}. \]
  Hence the unique critical point is
  \[ s_i=\frac{\beta_i-\alpha_i}{\alpha_i+\beta_i}. \]
  Since $\beta_i\geq \alpha_i$, these values lie in $[0,1]$. Because the ratios $\beta_i/\alpha_i$ are strictly increasing, the values $s_i$ are nondecreasing in $i$. Thus they satisfy the constraints and yield the constrained minimum. Substituting gives
  \[ 1-s_i=\frac{2\alpha_i}{\alpha_i+\beta_i}, \qquad 1+s_i=\frac{2\beta_i}{\alpha_i+\beta_i}, \]
  and therefore
  \begin{align*}
   (\alpha_i+\beta_i)\log 2-\alpha_i\log(1-s_i)\,-\,&\beta_i\log(1+s_i) \\
   &= (\alpha_i+\beta_i)\log(\alpha_i+\beta_i)-\alpha_i\log \alpha_i-\beta_i\log \beta_i.
  \end{align*}
  Summing over $i=1,\ldots,k$ leaves the extra term $\alpha_0\log 2$, and the formula follows.
 \end{proof}
 
 By continuity, we extend $\Phi_k$ to nonnegative coefficients $\alpha_i,\beta_i$, using the same formula as in Lemma \ref{model-asymp} and the convention $0\log 0:=0$. The dual formula of Lemma \ref{dual-formula} extends to these coefficients in the same way.

 %%%%%%%%%%%%%%%%%%%%%%%%%%%%%%%%%%%%%%%%%%%%%%%%%%%%%%%%%%%%%%%%%%
 \begin{lem}\label{Phi-perturb}
  Let $\Phi_k$ be as in Lemma \ref{model-asymp}, and suppose
  \[ \sum_{i=0}^k \alpha_i+\sum_{i=1}^k \beta_i\leq B. \]
  Let $0\leq j\leq k$ and $1\leq \ell\leq k$, and define
  \[ \alpha_i' := \begin{cases}
                   \alpha_i, & i\neq j,\\
                   0, & i=j,
                  \end{cases} \qquad
     \beta_i' := \begin{cases}
                  \beta_i, & i\neq \ell,\\
                  0, & i=\ell.
                 \end{cases} \]
  Then
   \[ 0\leq \Phi_k(\boldsymbol{\alpha},\boldsymbol{\beta})-\Phi_k(\boldsymbol{\alpha}',\boldsymbol{\beta}') \ll \alpha_j\bigg(1+\log\frac{B}{\alpha_j}\bigg) + \beta_\ell\bigg(1+\log\frac{B}{\beta_\ell}\bigg), \]
 \end{lem}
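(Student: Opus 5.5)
The plan is to work entirely with the dual formula of Lemma \ref{dual-formula}, in its continuous extension to nonnegative coefficients. For $\ul{s}=(s_1,\ldots,s_k)$ with $0\leq s_1\leq\cdots\leq s_k\leq 1$, set
\[ G(\boldsymbol{\alpha},\boldsymbol{\beta};\ul{s}) := \alpha_0\log 2+\sum_{i=1}^k \alpha_i\log\frac{2}{1-s_i}+\sum_{i=1}^k\beta_i\log\frac{2}{1+s_i}, \]
which is just $S\log 2-\sum\alpha_i\log(1-s_i)-\sum\beta_i\log(1+s_i)$ regrouped. Then $\Phi_k(\boldsymbol{\alpha},\boldsymbol{\beta})=\inf_{\ul{s}}G(\boldsymbol{\alpha},\boldsymbol{\beta};\ul{s})$. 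If $\alpha_j=0$ (resp.\ $\beta_\ell=0$) the corresponding perturbation is vacuous, so we may assume the coefficients being zeroed are positive.

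\emph{Lower bound.} Every coefficient multiplying $\alpha_i$ or $\beta_i$ in $G$ is nonnegative on the constraint set. Indeed, $\log\frac{2}{1-s_i}\geq\log 2$ and $\log\frac{2}{1+s_i}\geq 0$ since $s_i\in[0,1]$, and $\alpha_0$ carries the coefficient $\log 2$. Hence $G$ is pointwise nondecreasing in each coefficient. Since $\boldsymbol{\alpha}'\leq\boldsymbol{\alpha}$ and $\boldsymbol{\beta}'\leq\boldsymbol{\beta}$ coordinatewise, taking infima gives $\Phi_k(\boldsymbol{\alpha}',\boldsymbol{\beta}')\leq\Phi_k(\boldsymbol{\alpha},\boldsymbol{\beta})$.

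\emph{Upper bound.} Fix $\eta>0$ and take an admissible $\ul{s}'$ with $G(\boldsymbol{\alpha}',\boldsymbol{\beta}';\ul{s}')\leq\Phi_k(\boldsymbol{\alpha}',\boldsymbol{\beta}')+\eta$. Using $\ul{s}'$ directly for $(\boldsymbol{\alpha},\boldsymbol{\beta})$ fails, because the added term $\alpha_j\log\frac{2}{1-s'_j}$ may be unbounded when $s'_j$ is near $1$. So we truncate: for a parameter $\delta\in(0,1]$, put $\tilde s_i:=\min(s'_i,1-\delta)$. This preserves monotonicity and admissibility.

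We then compare $G(\boldsymbol{\alpha}',\boldsymbol{\beta}';\tilde{\ul{s}})$ with $G(\boldsymbol{\alpha}',\boldsymbol{\beta}';\ul{s}')$. Lowering $s_i$ decreases each $\alpha'_i\log\frac{2}{1-s_i}$. It increases each $\beta'_i\log\frac{2}{1+s_i}$ by at most
\[ \beta'_i\log\frac{1+s'_i}{2-\delta}\leq\beta'_i\log\frac{2}{2-\delta}\leq\beta'_i\,\delta. \]
Hence $G(\boldsymbol{\alpha}',\boldsymbol{\beta}';\tilde{\ul{s}})\leq\Phi_k(\boldsymbol{\alpha}',\boldsymbol{\beta}')+\eta+B\delta$.

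Next we add back the removed coefficients at $\tilde{\ul{s}}$. This costs exactly $\alpha_j\log\frac{2}{1-\tilde s_j}+\beta_\ell\log\frac{2}{1+\tilde s_\ell}$, with the case $j=0$ contributing just $\alpha_0\log 2$. This is at most $\alpha_j(\log 2+\log(1/\delta))+\beta_\ell\log 2$. Therefore
\[ \Phi_k(\boldsymbol{\alpha},\boldsymbol{\beta})\leq G(\boldsymbol{\alpha},\boldsymbol{\beta};\tilde{\ul{s}})\leq\Phi_k(\boldsymbol{\alpha}',\boldsymbol{\beta}')+\eta+B\delta+\alpha_j\Big(\log 2+\log\frac1\delta\Big)+\beta_\ell\log 2. \]

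Choosing $\delta:=\alpha_j/B\leq 1$ and letting $\eta\to0$ yields
\[ \Phi_k(\boldsymbol{\alpha},\boldsymbol{\beta})-\Phi_k(\boldsymbol{\alpha}',\boldsymbol{\beta}')\ll\alpha_j\Big(1+\log\frac{B}{\alpha_j}\Big)+\beta_\ell. \]
This is stronger than the claimed bound, since $1+\log(B/\beta_\ell)\geq 1$.

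The only delicate point is the truncation step: its loss must be controlled by the total mass $B$, and not by the individual coefficients. The remaining point to check is that the dual formula is valid for the extended, possibly zero, coefficients. The paper asserts this by continuity, and I would justify it by noting that both sides are monotone and continuous in the coefficients on the relevant compact sets.
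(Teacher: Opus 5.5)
Your proof is correct. The nonnegativity half is exactly the paper's argument: the dual objective is monotone in each coefficient. The upper bound, however, takes a genuinely different route.

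\textbf{The paper's route.} It stays in the primal formula that defines $\Phi_k$. It takes a maximizer $(\ul{u},\ul{v})\in\mathcal{R}_k$ for the problem with $\alpha_j$ replaced by $s$, and evaluates the reduced objective at that same point. What remains is $s\log u_j$ plus the change in the prefactor $S\log S-s\log s$. This integrates to $\int_0^s\log(S_tu_j/t)\,\mathrm{d}t\leq s\log(2eB/s)$, using only $u_j\leq 2$. So the $\log(B/\alpha_j)$ comes from the Stirling-type prefactor. No truncation is needed, because in the primal the reinserted term $s\log u_j$ is bounded above.

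\textbf{Your route.} In the dual the difficulty is reversed: the reinserted weight $\log\frac{2}{1-s_j}$ is unbounded. You handle it by truncating at $1-\delta$, paying $B\delta$ through the $\beta$-terms, and balancing with $\delta=\alpha_j/B$. Your logarithm comes from that optimization.

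\textbf{What each buys.} Your argument works within a single representation. It also gives a sharper bound for removing $\beta_\ell$, namely $\beta_\ell\log 2$ instead of $\beta_\ell\log(eB/\beta_\ell)$, because the dual weight of $\beta_\ell$ never exceeds $\log 2$. The paper's route keeps the upper bound independent of the extension of Lemma \ref{dual-formula} to zero coefficients; it needs that extension only for the sign.

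\textbf{Justifying the extension.} The most direct way is to rerun the Lagrangian computation, which goes through verbatim when some coefficients vanish. For $\beta_\ell=0$, the supremum over $v_\ell$ is $0$, matching the convention $0\log 0=0$. Appealing to monotonicity of the primal side, as you suggest, is weaker, since that monotonicity is not evident without the dual.

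\textbf{Edge case.} When $\alpha_j=0$, your choice $\delta=\alpha_j/B$ is not allowed. In that case just let $\delta\to 0^{+}$, or omit the truncation.
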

 \begin{proof}
  We first estimate the effect of removing $\alpha_j$. For $0\leq s\leq \alpha_j$, define
  \[ \Psi(s):=\Phi_k(\alpha_0,\ldots,\alpha_{j-1},s,\alpha_{j+1},\ldots,\alpha_k;\ \boldsymbol{\beta}). \]
  Let
  \[ S_s:=s+\sum_{\substack{0\leq i\leq k\\ i\neq j}}\alpha_i+\sum_{i=1}^k\beta_i. \]
  Fix $s\in (0,\alpha_j]$, and let $(\ul{u},\ul{v})\in \mathcal R_k$ be a maximizing point for $\Psi(s)$. Then
  \[ \Psi(s)-\Psi(0) \leq (S_s\log S_s-s\log s+s\log u_j)-S_0\log S_0 = \int_0^s \log\bigg(\frac{S_tu_j}{t}\bigg)\,\mathrm{d}t. \]
  Since $(\ul{u},\ul{v})\in \mathcal R_k$, we have $u_j\leq u_0+\cdots+u_k\leq 2$, and since $S_t\leq B$, it follows that
   \[ \Psi(s)-\Psi(0)\leq \int_0^s \log\bigg(\frac{2B}{t}\bigg)\,\mathrm{d}t = s\log\bigg(\frac{2eB}{s}\bigg) \ll s\bigg(1+\log\frac{B}{s}\bigg). \]
  Taking $s=\alpha_j$, we obtain
  \[ \Phi_k(\boldsymbol{\alpha},\boldsymbol{\beta})-\Phi_k(\boldsymbol{\alpha}',\boldsymbol{\beta})\ll \alpha_j\bigg(1+\log\frac{B}{\alpha_j}\bigg). \]
  The same argument applies to the removal of $\beta_\ell$. Hence,
  \begin{align*}
   \Phi_k(\boldsymbol{\alpha},\boldsymbol{\beta})-\Phi_k(\boldsymbol{\alpha}',\boldsymbol{\beta}') &= (\Phi_k(\boldsymbol{\alpha},\boldsymbol{\beta})-\Phi_k(\boldsymbol{\alpha}',\boldsymbol{\beta})) + (\Phi_k(\boldsymbol{\alpha}',\boldsymbol{\beta})-\Phi_k(\boldsymbol{\alpha}',\boldsymbol{\beta}')) \\
   &\ll \alpha_j\bigg(1+\log\frac{B}{\alpha_j}\bigg) + \beta_\ell\bigg(1+\log\frac{B}{\beta_\ell}\bigg).
  \end{align*}

  It remains only to note that the left-hand side is nonnegative. This follows from Lemma \ref{dual-formula}: in the dual formula, the coefficient $\alpha_j$ appears multiplied by $\log 2-\log(1-s_j)\geq 0$ if $j\geq 1$, while $\alpha_0$ appears multiplied by $\log 2$, and the coefficient $\beta_\ell$ appears multiplied by $\log 2-\log(1+s_\ell)\geq 0$. Thus decreasing either coefficient cannot increase the infimum.
 \end{proof}
 
 %%%%%%%%%%%%%%%%%%%%%%%%%%%%%%%%%%%%%%%%%%%%%%%%%%%%%%%%%%%%%%%%%%
 We are now ready to prove Theorem \ref{thm2pi}.
 
 \begin{proof}[Proof of Theorem \ref{thm2pi}]
  Let $L_N$ and $U_N$ be as in \eqref{Lbody} and \eqref{Ubody}. By Proposition \ref{ublbPN}, for every fixed $0<\eps<1$ and every $q\in(0,1)$, we have
  \[ \log \NVol(L_N)\leq \log V_N\leq \log \NVol(U_N) + O\bigg(\eps\log\bigg(\frac{1}{\eps}\bigg)\frac{\sqrt N}{\log N}\bigg) + O\bigg(\frac{\sqrt N}{(\log N)^2}\bigg). \]
  We now identify $L_N$ and $U_N$ with the model bodies of Lemma \ref{model-asymp}. Since $k$ is fixed once $\eps$ and $q$ are fixed, all asymptotics below are uniform in $0\leq i\leq k$. Set
  \[ d_0^{\pm}(N) := \pi(\sqrt N), \quad d_i^{\pm}(N):=\pi(m_i^{\pm}) \quad (1\leq i\leq k), \qquad r_i(N):=|\mathcal P_i| \quad (1\leq i\leq k). \]
  Then $L_N$ (resp. $U_N$) is exactly the body from Lemma \ref{model-asymp} built from the data $d_i^{-}(N)$ (resp. $d_i^{+}(N)$) and $r_i(N)$. Note that for every fixed $c>0$, the prime number theorem gives
  \[ \pi(c\sqrt N) = (2c+o(1))\frac{\sqrt N}{\log N}. \]
  Since $m_i^{-}=q^i\sqrt N+O(1)$ and $m_i^{+}=q^{i-1}\sqrt N+O(1)$, it follows that
  \[ \begin{cases}
      d_i^{-}(N)=(2q^i+o(1))\dfrac{\sqrt N}{\log N} &(0\leq i\leq k), \\
      d_i^{+}(N)=(2q^{i-1}+o(1))\dfrac{\sqrt N}{\log N} &(1\leq i\leq k), \qquad d_0^{+}(N)=(2+o(1))\dfrac{\sqrt N}{\log N}.
     \end{cases} \]
  The corresponding quantities $a_i(N)$ in that lemma are
  \[ a_i^{\pm}(N):=d_i^{\pm}(N)-d_{i+1}^{\pm}(N) \quad (0\leq i<k), \qquad a_k^{\pm}(N):=d_k^{\pm}(N). \]
  Likewise,
  \[ |\mathcal P_i| = \pi\bigg(\frac{\sqrt N}{q^i}\bigg)-\pi\bigg(\frac{\sqrt N}{q^{i-1}}\bigg) = (2(1-q)q^{-i}+o(1)) \frac{\sqrt N}{\log N} \qquad (1\leq i\leq k).
  \]
  Thus Lemma \ref{model-asymp} applies with
  \[ \alpha_i^{-} = \begin{cases}
                     2(1-q)q^i,& 0\leq i<k,\\
                     2q^k,& i=k,
                    \end{cases} \qquad
     \alpha_i^{+} = \begin{cases}
                     0,& i=0,\\
                     2(1-q)q^{i-1},& 1\leq i<k,\\
                     2q^{k-1},& i=k,
                    \end{cases} \]
  and $\beta_i=2(1-q)q^{-i}$ for $1\leq i\leq k$, and we obtain
  \begin{equation}
   \frac{\log N}{\sqrt N} \log \NVol(L_N)\to \Phi_k(\boldsymbol{\alpha}^{-},\boldsymbol{\beta}), \qquad \frac{\log N}{\sqrt N}\log \NVol(U_N)\to \Phi_k(\boldsymbol{\alpha}^{+},\boldsymbol{\beta}). \label{limvol}
  \end{equation}
  
  Using Lemma \ref{Phi-perturb}, since $q^k\asymp \eps$, we may remove the pair $(\alpha_k^\pm,\beta_k)$ at a total cost of
  \[ O\bigg(\eps\log\bigg(\frac{1}{\eps}\bigg)\bigg) + o_{q\to 1,\eps}(1). \]
  After removing this pair, we identify the resulting quantity with $\Phi_{k-1}$ for the remaining coefficients. We have
  \[ \frac{\beta_i}{\alpha_i^-}=q^{-2i}, \qquad \frac{\beta_i}{\alpha_i^+}=q^{1-2i} \qquad (1\leq i\leq k-1), \]
  and also $\beta_i\geq \alpha_i^\pm$. Thus Lemma \ref{geom-eval} applies, and we obtain
  \begin{equation*}
   \begin{gathered}
    \Phi_k(\boldsymbol{\alpha}^-,\boldsymbol{\beta})=\alpha_0^-\log 2+\sum_{i=1}^{k-1}2(1-q)q^i\,\phi(q^{-2i})+O\bigg(\eps\log\bigg(\frac{1}{\eps}\bigg)\bigg)+o_{q\to 1,\eps}(1), \\
    \Phi_k(\boldsymbol{\alpha}^+,\boldsymbol{\beta})=\sum_{i=1}^{k-1}2(1-q)q^{i-1}\,\phi(q^{1-2i})+O\bigg(\eps\log\bigg(\frac{1}{\eps}\bigg)\bigg)+o_{q\to 1,\eps}(1), 
   \end{gathered}
  \end{equation*}
  where $\phi(t):=(1+t)\log(1+t)-t\log t$.\footnote{Note that $\phi(t) = Q(t+1)-Q(t)+1$ where $Q(t)=t\log t - t +1$.}
  
  Since $\alpha_0^-=2(1-q)$, we have $\alpha_0^-\log 2\to 0$ as $q\to 1^{-}$. Writing $x_i:=q^i$, so that
  \[ 2(1-q)q^i=2(x_i-x_{i+1}), \qquad q^{-2i}=x_i^{-2}, \]
  we see that the lower sum is a Riemann sum for $\int_{\eps}^1 2\,\phi(x^{-2})\,\mathrm{d}x$.
  Likewise,
  \[ 2(1-q)q^{i-1}=2(x_{i-1}-x_i), \qquad q^{1-2i}=(x_{i-1}x_i)^{-1}, \]
  so the upper sum is a Riemann sum for the same integral. Hence,
  \begin{equation}
   \Phi_k(\boldsymbol{\alpha}^\pm,\boldsymbol{\beta}) = (1+o_{q\to 1^{-}}(1)) \int_\eps^1 2\,\phi(x^{-2})\,\mathrm{d}x + o_{\eps\to 0^{+}}(1). \label{rmnnSum}
  \end{equation}
  By Proposition \ref{ublbPN} together with \eqref{limvol}, it remains to evaluate \eqref{rmnnSum}.

  Let
  \[ g(x) := 2\phi(x^{-2}) = 2\bigg(1+\frac{1}{x^2}\bigg)\log(1+x^2) - 4\log x. \]
  Then $g'(x) = -4\log(1+x^2)/x^3$. Integrating by parts gives
  \begin{align*}
   \int_{0^+}^1 g(x)\,dx
   &=[xg(x)]_{0^+}^1-\int_{0^+}^1 xg'(x)\,\mathrm{d}x \\
   &= 4\log 2+ 4\int_{0^+}^1 \frac{\log(1+x^2)}{x^2}\,\mathrm{d}x \\
   &= 4\log 2+ 4\bigg(-\log 2+\int_{0^+}^1 \frac{2\,\mathrm{d}x}{1+x^2}\bigg)= 2\pi.
  \end{align*}
  Thus, letting $q\to 1^{-}$ and $\eps\to 0^{+}$, we conclude that
  \[ \frac{\log V_N}{\sqrt{N}/\log N}\to 2\pi.\qedhere \]
 \end{proof}

%%%%%%%%%%%%%%%%%%%%%%
\addtocontents{toc}{\protect\setcounter{tocdepth}{0}}
\section*{Acknowledgements}
 The authors would like to thank Andrew Granville, Leo Goldmakher and Dimitris Koukoulopoulos for their advice and encouragement.
 
\addtocontents{toc}{\protect\setcounter{tocdepth}{1}}
%%%%%%%%%%%%%%%%%%%%%%

% ----------------------------------------------------------------
\bibliographystyle{amsplain}
\bibliography{refs.bib}%
\end{document}